\theoremstyle{plain}
    \newtheorem{theorem}{Theorem}[section]
    \newtheorem{lemma}[theorem]{Lemma}
    \newtheorem{corollary}[theorem]{Corollary}
    \newtheorem{proposition}[theorem]{Proposition}
 \theoremstyle{definition}
    \newtheorem{definition}[theorem]{Definition}
    \newtheorem{remark}[theorem]{Remark}
\theoremstyle{remark}
\numberwithin{equation}{section}
 \DeclareMathOperator{\proj}{proj}
  \DeclareMathOperator{\red}{red}
\DeclareMathOperator{\Ad}{Ad}
\DeclareMathOperator{\ind}{index}
\DeclareMathOperator{\grad}{grad}
\DeclareMathOperator{\End}{End}
\DeclareMathOperator{\Hom}{Hom}
\DeclareMathOperator{\Spin}{Spin}
\DeclareMathOperator{\OO}{O}
\DeclareMathOperator{\SO}{SO}
\DeclareMathOperator{\U}{U}
  \DeclareMathOperator{\Res}{Res}
  \DeclareMathOperator{\DInd}{D-Ind}
\DeclareMathOperator{\vol}{vol}
\begin{document}


\newcommand{\myemph}{\emph}

\newcommand{\Spinc}{\Spin^c}

    \newcommand{\R}{\mathbb{R}}
    \newcommand{\C}{\mathbb{C}}
    \newcommand{\N}{\mathbb{N}}
    \newcommand{\Z}{\mathbb{Z}}
    \newcommand{\Q}{\mathbb{Q}}
    \newcommand{\bT}{\mathbb{T}}
    \newcommand{\bP}{\mathbb{P}}

\newcommand{\g}{\mathfrak{g}}
\newcommand{\h}{\mathfrak{h}}
\newcommand{\p}{\mathfrak{p}}
\newcommand{\kg}{\mathfrak{g}}
\newcommand{\kt}{\mathfrak{t}}
\newcommand{\ka}{\mathfrak{a}}
\newcommand{\XX}{\mathfrak{X}}
\newcommand{\kh}{\mathfrak{h}}
\newcommand{\kp}{\mathfrak{p}}
\newcommand{\kk}{\mathfrak{k}}

\newcommand{\cA}{\mathcal{A}}
\newcommand{\cE}{\mathcal{E}}
\newcommand{\calL}{\mathcal{L}}
\newcommand{\calH}{\mathcal{H}}
\newcommand{\cO}{\mathcal{O}}
\newcommand{\cB}{\mathcal{B}}
\newcommand{\cK}{\mathcal{K}}
\newcommand{\cP}{\mathcal{P}}
\newcommand{\cN}{\mathcal{N}}
\newcommand{\calD}{\mathcal{D}}
\newcommand{\cC}{\mathcal{C}}
\newcommand{\calS}{\mathcal{S}}
\newcommand{\cM}{\mathcal{M}}

\newcommand{\cCM}{\cC}
\newcommand{\PM}{P}
\newcommand{\DM}{D}
\newcommand{\LM}{L}
\newcommand{\vM}{v}

\newcommand{\sumGam}{\textstyle{\sum_{\Gamma}} }

\newcommand{\sigDg}{\sigma^D_g}

\newcommand{\Bigwedge}{\textstyle{\bigwedge}}

\newcommand{\ii}{\sqrt{-1}}

\newcommand{\Ubar}{\overline{U}}

\newcommand{\beq}[1]{\begin{equation} \label{#1}}
\newcommand{\eeq}{\end{equation}}

\newcommand{\mattwo}[4]{
\left( \begin{array}{cc}
#1 & #2 \\ #3 & #4
\end{array}
\right)
}

\newenvironment{proofof}[1]
{\noindent \emph{Proof of #1.}}{\hfill $\square$}

\title{An equivariant orbifold index for proper actions}

\author{Peter Hochs} 
\address{University of Adelaide}
\email{peter.hochs@adelaide.edu.au}
\author{Hang Wang}
\address{East China Normal University}
\email{wanghang@math.ecnu.edu.cn}



\date{\today}

\maketitle

\begin{abstract}
For a proper, cocompact action by a locally compact group of the form $H \times G$, with $H$ compact, we
define an $H \times G$-equivariant index of $H$-transversally elliptic operators, which takes values in $KK_*(C^*H, C^*G)$.
This
simultaneously generalises the Baum--Connes analytic assembly map, Atiyah's index of transversally elliptic operators, and Kawasaki's orbifold index. This index also generalises the assembly map to elliptic operators on orbifolds. 
In the special case where the manifold in question is a real semisimple Lie group, $G$ is a cocompact lattice and $H$ is a maximal compact subgroup, we realise the Dirac induction map from the Connes--Kasparov conjecture as a Kasparov product and obtain an index theorem for $\Spin$-Dirac operators on compact locally symmetric spaces.
\end{abstract}

\tableofcontents

\section{Introduction}

Two natural ways in which orbifolds occur are as quotients of locally free actions by compact groups, and of proper actions by discrete groups. In fact, every orbifold can be realised as the quotient of a locally free action by compact group $H$ on a manifold $\tilde M$; see e.g.\ \cite{Farsi92, Vergne96}. A well-known approach to index theory on a compact orbifold $M := \tilde M/H$ is to consider an elliptic operator $D$ on $M$ as a transversally elliptic operator $\tilde D$ on $\tilde M$, and define the index of $D$ as the $H$-invariant part of the $H$-equivariant index of $\tilde D$ in the sense of Atiyah \cite{Atiyah74}.

Intuitively, if an orbifold $M$ is realised as the quotient of a proper action by a discrete group $\Gamma$ on a manifold $X$, then the orbifold index of an operator on $M$ is the $\Gamma$-invariant part of its lift to an operator on $X$. This can be made precise in terms of the Baum--Connes assembly map, with values in the $K$-theory of the maximal group $C^*$-algebra of $\Gamma$, from which the invariant part can be obtained by an application of the map given by summing over $\Gamma$.

Our purpose in this paper is to unify and extend these two approaches to orbifold index theory. Along the way, we construct a generalisation of the Baum--Connes assembly map from manifolds to orbifolds. 
We also realise the Dirac induction map from the Connes--Kasparov conjecture as a Kasparov product.

For a compact group $H$ and a locally compact group $G$, and a proper, isometric, cocompact action by $H \times G$ on a manifold $\tilde M$, we define an index of $H\times G$-equivariant, $H$-transversally elliptic operators on $\tilde M$, with values in $KK_*(C^*H, C^*G)$. This builds on parts of Kasparov $KK$-theoretic treatment of transversally elliptic operators \cite{Kasparov15}.
If $H$ is trivial, then this index is the Baum--Connes assembly map. If $G$ is trivial, it is Atiyah's index of transversally elliptic operators, whose $H$-invariant part is the realisation of the orbifold index on $M = \tilde M/H$ mentioned above, if $H$ acts locally freely.  In general, the  pairing of this index with the class of the trivial representation of $H$ in $K_*(C^*H)$  generalises the Baum--Connes assembly map to $G$-equivariant elliptic operators on the possibly singular space $M$.

Another $K$-theoretic approach to index theory on orbifolds was developed by Farsi \cite{Farsi92}. An index of families of transversally elliptic operators was constructed and applied in a $KK$-theoretic setting by Baldar\'e \cite{Baldare2, Baldare1}.

The index in  $KK_*(C^*H, C^*G)$ is furthest removed form existing index theory if the action by $H$ on $\tilde M$ is not free, and $M$ is not a smooth manifold. However, even if $H$ acts freely, this index refines existing (orbifold) indices, as the double quotient $\tilde M/(H \times G)$ may be singular. We investigate the index in this special case, and find relations with the Baum--Connes assembly map.

A natural setting in which this applies is the case of compact locally symmetric spaces $\Gamma \backslash G /K$, where $G$ is a real semisimple group, $\Gamma < G$ is a cocompact lattice, and $K<G$ is maximal compact. For a class of examples including these spaces, we show that the index of an elliptic operator on $\Gamma \backslash G /K$ can be obtained both as the $K$-invariant part of a transversally elliptic operator on $\Gamma \backslash G$ and as the $\Gamma$-invariant part (in a $K$-theoretic sense) of the index of an elliptic operator on $G/K$. These approaches are unified in the sense that both indices  are obtained from the index of a $K \times \Gamma$-equivariant, transversally elliptic operator on $G$, which generalises and refines the two indices on $\Gamma \backslash G$ and  $G/K$.
In this sense, the index we consider here encodes the most refined index-theoretic information on $\Gamma \backslash G /K$, and simultaneously incorporates the $\Gamma$- and $K$-symmetries.
We obtain explicit expressions for the values of natural traces on the $\Gamma \times K$-equivariant index of the lift of the $\Spin$-Dirac operator from $G/K$ to $G$ in this context.



\subsection*{Acknowledgements}

H. Wang acknowledges support from Thousand Youth Talents Plan and from grants NSFC-11801178 and Shanghai Rising-Star Program 19QA1403200.

\section{Preliminaries and results}

\subsection{Preliminaries}

Let $\tilde M$ be a Riemannian manifold. Let $G$ and $H$ be Lie groups, acting isometrically on $\tilde M$. Suppose that the actions by the two groups commute. Then $G$ has the induced action on the quotient $M := \tilde M/H$. Every orbifold occurs in this way, for a compact Lie group $H$ acting locally freely on $\tilde M$ (see e.g.\ the introductions to \cite{Farsi92, Vergne96}, and Remark \ref{rem On frame}). But we will consider more general spaces of the form $M = \tilde M/H$.

Let $E \to M$ be a Hermitian, $\Z_2$-graded, $G$-equivariant, continuous vector bundle. Let $q\colon \tilde M \to M$ be the quotient map. Suppose that $\tilde E := q^*E \to \tilde M$ has the structure of a smooth vector bundle.

\begin{definition}\label{def ell}
A $G$-equivariant \emph{differential operator} on $E$ is an operator $D$ on $\Gamma^{\infty}(\tilde E)^H$ that is the restriction of a $G\times H$-equivariant differential operator $\tilde D$ on $\Gamma^{\infty}(\tilde E)$. Such a differential operator $D$ is \emph{elliptic} if the operator $\tilde D$ is (or can be chosen to be) transversally elliptic with respect to the action by $H$.
\end{definition}
If $H$ acts properly and freely on $\tilde M$, then this definition reduces to the usual definition of elliptic differential operators.
\begin{remark}
If $D$ is a first order differential operator, and $H$ acts properly and freely, then Definition \ref{def ell} becomes very explicit. In this case $D$ has a unique pullback along any smooth, $G$-equivariant map $f\colon N \to M$ to a linear operator $f^*D$ on $\Gamma^{\infty}(f^*E)$ satisfying
\begin{itemize}
\item for all $s \in \Gamma^{\infty}(E)$,
\[
(f^*D)(f^*s) = f^*(D s);
\]
\item for all $\sigma \in \Gamma^{\infty}(f^*E)$ and $\varphi \in C^{\infty}(N)$,
\[
(f^*D)(\varphi \sigma) = 
 \sigma_D(Tf \circ \grad(\varphi))\sigma + \varphi (f^*D)\sigma. 
\]
\end{itemize}
Here $\sigma_{D}$ is the principal symbol of $D$. Existence and uniqueness of $f^*D$ can be proved in the same way as one proves that pullbacks of connections are well-defined. In this case, we may take $\tilde D = q^*D$ in Definition \ref{def ell}.
\end{remark}

We fix a first order, $G$-equivariant, self-adjoint, elliptic differential operator $D$ on $E$, that is odd with respect to the grading, and a lift $\tilde D$ of $D$ to $\tilde E$ as in Definition \ref{def ell}. Let $\tilde D^+$ and $\tilde D^-$ be the restrictions of $\tilde D$ to sections of the even and odd parts of $\tilde E$, respectively.

If $M$, $G$ and $H$ are compact, then a natural definition of the $G$-equivariant index of $D$ is
\beq{eq ind cpt}
\ind_G(D) = [\ker(\tilde D^+)^H] - [\ker(\tilde D^-)^H]. 
\eeq
This is an element of the representation ring of $G$.
An index formula for such an index was given by Vergne \cite{Vergne96}. It is our goal in this note to generalise this index to noncompact $M$ and $G$, assuming the action to be proper, and $M/G$ to be compact. 


\subsection{The index}

From now on, suppose that $H$ is compact, that $G$ acts properly on $\tilde M$ (and hence on $M$), and that $M/G$ is compact. Then there is a \emph{cutoff function} $c \in C_c(M)$, such that for all $m \in M$,
\[
\int_{G}c(gm)^2\, dg = 1
\]
for a left Haar measure $dg$ on $G$. The pullback $\tilde c := q^*c \in C_c(\tilde M)^H$ then has the analogous property.

Consider the idempotents $p \in C_0(M) \rtimes G$ and $\tilde p \in C_0(\tilde M) \rtimes G$ defined by
\beq{eq p}
\begin{split}
p(m,g)&= c(m)c(g^{-1}m);\\
\tilde p(\tilde m,g) &= \tilde c(\tilde m) \tilde c(g^{-1}\tilde m),
\end{split}
\eeq
for $m \in M$, $\tilde m \in \tilde M$, and $g \in G$. They define classes
\[
\begin{split}
[p]&\in KK(\C, C_0(M)\rtimes G);\\
[\tilde p]&\in KK_H(\C, C_0(\tilde M)\rtimes G).
\end{split}
\]

Let
\[
\pi_{\tilde M, H}\colon C_0(\tilde M)\rtimes H\to \cB(L^2(\tilde E))
\]
be the $*$-representation defined by pointwise multiplication by functions in $C_0(M)$, and the unitary representation of $H$ in $L^2(\tilde E)$. Kasparov showed in Proposition 6.4 in \cite{Kasparov15} that the transversally elliptic operator $\tilde D$ defines a class
\beq{eq tilde D KK}
[\tilde D] := \bigl[L^2(\tilde E), \frac{\tilde D}{\sqrt{\tilde D^2+1}}, \pi_{\tilde M, H} \bigr] \in KK^G_0(C_0(\tilde M)\rtimes H, \C).
\eeq

Let
\[
\begin{split}
j^G\colon&KK_*^G(C_0(\tilde M) \rtimes H, \C)\to KK_*(C_0(\tilde M) \rtimes (G\times H), C^*G);\\
j^H\colon&KK_*^H(\C, C_0(\tilde M)\rtimes G)\to KK_*(C^*H, C_0(\tilde M)\rtimes (G\times H)).
\end{split}
\]
be descent maps, see 3.11 in \cite{Kasparov88}. Here we used the fact that the actions by $G$ and $H$ commute.
(We use either maximal or reduced crossed-products and group $C^*$-algebras.)

Consider the classes
\[
\begin{split}
 j^H[\tilde p] &\in KK_*(C^*H, C_0(\tilde M)\rtimes (G\times H));\\
 j^G[\tilde D]&\in KK_*(C_0(\tilde M) \rtimes (G\times H), C^*G).
\end{split}
\]
Let $[1_H] \in KK(\C, C^*H) = R(H)$ be the class of the trivial representation of $H$.
\begin{definition}\label{def index}
The \emph{$(H,G)$-equivariant index} of $\tilde D$ is
\[
\ind_{H, G}(\tilde D) = j^H[\tilde p] \otimes_{C_0(\tilde M) \rtimes (G\times H)}j^G[\tilde D] \quad \in KK_*(C^*H, C^*G).
\]
The \emph{$G$-equivariant index} of $D$ is
\[
\ind_G(D) = [1_H] \otimes_{C^*H} \ind_{H, G}(\tilde D) \quad \in KK_*(\C, C^*G).
\]

More generally, for any
locally compact group $G$, compact group $H$, and
locally compact, Hausdorff, proper, cocompact $G\times H$ space $X$, the  \emph{$G$-equivariant index map}
\[
\ind_G\colon KK_*^G(C_0(X)\rtimes H, \C) \to K_*(C^*G)
\]
is defined by
\[
\ind_G(a) = [1_H] \otimes_{C^*H}  j^H[\tilde p] \otimes_{C_0(\tilde M) \rtimes (G\times H)}j^G(a),
\]
for $a \in KK_*^G(C_0(X)\rtimes H, \C)$.
\end{definition}
In the definition of $\ind_G(D)$, the Kasparov product with $[1_H]$ plays the role of taking the $H$-invariant part of the $G\times H$-equivariant index of $\tilde D$, analogously to \eqref{eq ind cpt}.

In the rest of this note, we investigate some  properties and applications of these indices. First of all, the $(H, G)$-equivariant index generalises Atiyah's  \cite{Atiyah74} and Kawasaki's \cite{Kawasaki81} classical indices.
\begin{lemma}
If $M$ is compact and $G = \{e\}$ is the trivial group, then $\ind_{H, \{e\}} \tilde D \in KK(C^*H,\C)$ is the index of $\tilde D$ in the sense of Atiyah. In particular, if $H$ acts locally freely on $M$, then $\ind_{\{e\}}(D)$ is Kawasaki's orbifold index of $D$.
\end{lemma}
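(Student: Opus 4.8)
The plan is to unwind Definition~\ref{def index} in this special case and to recognise the resulting $KK$-class as the standard operator-algebraic incarnation of Atiyah's transversally elliptic index. Since $G=\{e\}$, the cutoff condition forces $c\equiv 1$ on the compact space $M$, hence $\tilde c\equiv 1$; also $C_0(\tilde M)\rtimes G = C_0(\tilde M) = C(\tilde M)$, where $\tilde M$ is compact because $H$ is compact and $M=\tilde M/H$ is. Thus $\tilde p = 1 \in C(\tilde M)$, so $[\tilde p]\in KK_H(\C,C(\tilde M))$ is the class $[\iota_0]$ of the $H$-equivariant unital inclusion $\iota_0\colon\C\hookrightarrow C(\tilde M)$, while $j^G$ is the identity map, so $j^G[\tilde D]=[\tilde D]$. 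Using that the descent of the class of an equivariant $*$-homomorphism is the class of the induced homomorphism on crossed products, one gets $j^H[\iota_0]=[\bar\iota]$ with $\bar\iota:=\iota_0\rtimes H\colon C^*H\hookrightarrow C(\tilde M)\rtimes H$; and since the Kasparov product with a $*$-homomorphism class is the corresponding pullback,
\[
\ind_{H,\{e\}}(\tilde D) = \bar\iota^{*}[\tilde D] = \Bigl[\, L^2(\tilde E),\ \tfrac{\tilde D}{\sqrt{\tilde D^2+1}},\ \pi_{\tilde M,H}\circ\bar\iota \,\Bigr] \;\in\; KK(C^*H,\C).
\]
Because $\iota_0$ embeds the constant functions, $\pi_{\tilde M,H}\circ\bar\iota$ is simply the integrated form of the unitary representation $\rho$ of $H$ on $L^2(\tilde E)$, so $\ind_{H,\{e\}}(\tilde D)$ is represented by the cycle $\bigl(L^2(\tilde E),\,\tfrac{\tilde D}{\sqrt{\tilde D^2+1}},\,\rho\bigr)$.

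This is precisely the $K$-homological model of the $H$-transversally elliptic index of $\tilde D$ on the compact manifold $\tilde M$. Indeed, the very fact that it is a $KK$-cycle --- Proposition~6.4 of \cite{Kasparov15}, already invoked in \eqref{eq tilde D KK} --- encodes Atiyah's observation that $(\tilde D^2+1)^{-1}$ acts compactly on every $H$-isotypical subspace of $L^2(\tilde E)$, whence each isotypical component of $\ker\tilde D^{\pm}$ is finite-dimensional. Pairing the class with $[\pi]\in R(H)=K_0(C^*H)$ yields the Fredholm index of $\tilde D$ restricted to $\Hom_H(V_\pi,L^2(\tilde E))$, namely $[\ker\tilde D^+:\pi]-[\ker\tilde D^-:\pi]$; these are exactly the coefficients of Atiyah's index $\sum_{\pi}\bigl([\ker\tilde D^+:\pi]-[\ker\tilde D^-:\pi]\bigr)[\pi]$ of \cite{Atiyah74}. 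Since a class in $KK(C^*H,\C)$ is determined by its pairings with $R(H)$, it follows that $\ind_{H,\{e\}}(\tilde D)$ is Atiyah's index of $\tilde D$.

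For the final assertion: if moreover $H$ acts locally freely on $\tilde M$, then $M=\tilde M/H$ is a compact orbifold and $D$ is an elliptic operator on it in the classical sense. By Definition~\ref{def index}, $\ind_{\{e\}}(D)=[1_H]\otimes_{C^*H}\ind_{H,\{e\}}(\tilde D)$, which by the previous paragraph is the pairing of Atiyah's index with the trivial representation, i.e.\ $\dim(\ker\tilde D^+)^H-\dim(\ker\tilde D^-)^H$; this is exactly the realisation of Kawasaki's orbifold index recalled in the introduction (compare \cite{Kawasaki81,Vergne96}). None of these steps is genuinely hard --- the argument is a formal manipulation of descent and Kasparov products together with a citation --- and if there is any subtlety it lies entirely in the identification of the previous paragraph: one should fix once and for all which completion of $R(H)$ houses Atiyah's index, and note that for the compact group $H$ the maximal and reduced crossed products coincide (so Definition~\ref{def index} is unambiguous here) and that the descent of the unit projection $\tilde p=1$ really is the plain inclusion $C^*H\hookrightarrow C(\tilde M)\rtimes H$, not a twisted embedding.
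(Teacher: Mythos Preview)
Your argument is correct and follows essentially the same route as the paper: both identify $\tilde p$ with the unit in $C(\tilde M)$ so that $[\tilde p]$ is the class of the unital inclusion $\C\hookrightarrow C(\tilde M)$, compute $\ind_{H,\{e\}}(\tilde D)$ as the pullback of $[\tilde D]$ along the induced map $C^*H\hookrightarrow C(\tilde M)\rtimes H$ (the paper phrases this as $\psi_*[\tilde D]$ for $\psi\colon\tilde M\to\pt$), and then invoke Kasparov's identification of this class with Atiyah's index (the paper cites Remark~6.7 of \cite{Kasparov15} directly, whereas you re-derive the pairing with $R(H)$). Your version is simply more explicit about the descent and the resulting cycle.
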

\begin{proof}
If $M$ is compact, and $G = \{e\}$, then we can take $c$ to be the constant function $1$ on $M$. Then $[\tilde p]$ is the class of the only $*$-homomorphism $\C \to C_0(\tilde M)$. So
\[
\ind_{H\times \{e\}}(\tilde D) = \psi_*[\tilde D] \in KK(C^*H,\C),
\]
where $\psi$ is the map from $\tilde M$ to a point. The right hand side of the above equality is the index of $\tilde D$ in the sense of Atiyah, see Remark 6.7 in \cite{Kasparov15}. Pairing this index with $[1_H]$ means taking its $H$-invariant part, which yields Kawasaki's index of $D$ if $H$ acts locally freely.
\end{proof}

\begin{remark}
Theorem 8.18 in \cite{Kasparov15} is a topological expression for the class \eqref{eq tilde D KK} in terms of the principal symbol of $\tilde D$. This directly implies  analogous $KK$-theoretic index theorems for $\ind_{H,G}(\tilde D)$ and $\ind_{G}(D)$.
\end{remark}

The case where $H$ acts trivially on $\tilde M$ is not the most interesting, but we include it as a consistency check. For any locally compact, Hausdorff, proper, cocompact $G$-space $X$, let
\[
\mu_X^G\colon KK_G(C_0(X),\C)\to KK(\C, C^*G)
\]
be the analytic assembly map \cite{Connes94}.
\begin{lemma}\label{lem H triv}
If $H$ acts trivially on a
locally compact, Hausdorff, proper, cocompact $G$-space $X$,
 then the map
\[
\ind_{G}\colon KK_*^G(C_0(X)\rtimes H, \C) =  
KK_*^G(C_0(X), \C) \otimes \hat R(H)
\to K_*(C^*G)
\]
is given by
\[
\ind_G(a \otimes [V]) = \left\{ 
\begin{array}{ll}
\mu_X^G(a) & \text{if $V=1_H$};\\
0 & \text{otherwise},
\end{array}
\right.
\]
for $a \in KK_*^G(C_0(X), \C)$ and $V \in \hat H$. Here $\mu_X^G$ is the analytic assembly map, and $\hat R(H)$ is the completed representation ring of $H$.
\end{lemma}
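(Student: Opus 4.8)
The plan is to exploit that a trivial $H$-action makes $C_0(X)\rtimes H = C_0(X)\otimes C^*H$ (with $G$ acting trivially on the $C^*H$-factor), so that $\ind_G$ splits along the block decomposition $C^*H = \bigoplus_{V\in\hat H}\End(V)$, each block being Morita trivial. First I would record the resulting isomorphism $KK_*^G(C_0(X)\rtimes H,\C) = KK_*^G(C_0(X),\C)\otimes\hat R(H)$, under which a general class is an additive combination of pieces $a\otimes[V]$; writing $\beta_V\in KK(C^*H,\C)$ for the block projection $C^*H\twoheadrightarrow\End(V)$ followed by the Morita class $\End(V)\sim\C$, the piece $a\otimes[V]$ is the external Kasparov product $a\boxtimes\beta_V$, with $\beta_V$ carrying the trivial $G$-structure. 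By biadditivity of descent and of the Kasparov product it suffices to evaluate $\ind_G$ on a single $a\boxtimes\beta_V$.

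Next I would unwind the two descended factors. Here $M = X/H = X$ and $q = \id$, so $\tilde c = c$ and $\tilde p = p\in C_0(X)\rtimes G$ is the ordinary cutoff projection of \eqref{eq p}; since $H$ acts trivially, $[\tilde p]$ is the image of $[p]\in KK(\C, C_0(X)\rtimes G)$ under the trivial-action functor. Using that the descent maps are additive, multiplicative functors that commute with tensoring by a trivial-action coefficient algebra, and that they send a trivially equivariant class $z$ to $z\boxtimes\id_{C^*(\text{group})}$ (see 3.11 in \cite{Kasparov88}), one gets
\[
j^H[\tilde p] = [p]\boxtimes\id_{C^*H}, \qquad j^G(a\boxtimes\beta_V) = j^G(a)\boxtimes\beta_V,
\]
in $KK(C^*H,(C_0(X)\rtimes G)\otimes C^*H) = KK(C^*H, C_0(X)\rtimes(G\times H))$ and in $KK((C_0(X)\rtimes G)\otimes C^*H, C^*G) = KK(C_0(X)\rtimes(G\times H), C^*G)$ respectively, using that $H$ acts trivially on $C_0(X)\rtimes G$.

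Finally I would assemble the product. Since external products compose factorwise,
\[
j^H[\tilde p]\otimes_{C_0(X)\rtimes(G\times H)}j^G(a\boxtimes\beta_V) = \bigl([p]\otimes_{C_0(X)\rtimes G}j^G(a)\bigr)\boxtimes\bigl(\id_{C^*H}\otimes_{C^*H}\beta_V\bigr) = \mu_X^G(a)\boxtimes\beta_V,
\]
where I used the standard description $\mu_X^G(a) = [p]\otimes_{C_0(X)\rtimes G}j^G(a)$ of the analytic assembly map for a proper cocompact action. Composing on the left with $[1_H]\in KK(\C,C^*H)$ then gives $\ind_G(a\boxtimes\beta_V) = [1_H]\otimes_{C^*H}\bigl(\mu_X^G(a)\boxtimes\beta_V\bigr) = \mu_X^G(a)\cdot\bigl([1_H]\otimes_{C^*H}\beta_V\bigr)$. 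Now $[1_H]$ is the inclusion of the trivial-representation block of $C^*H$ and $\beta_V$ factors through the $V$-block; these are orthogonal unless $V = 1_H$, in which case $\beta_{1_H}$ is exactly the projection onto that block, so $[1_H]\otimes_{C^*H}\beta_{1_H} = 1_\C$. Hence $\ind_G(a\otimes[V])$ equals $\mu_X^G(a)$ for $V = 1_H$ and $0$ otherwise, which is the claim. I expect the only real subtlety to be the bookkeeping around the descent functors — additivity, multiplicativity, and compatibility with $\tau$ by a fixed coefficient algebra, uniformly for the maximal and the reduced crossed products used throughout — together with the minor checks that $\tilde p$ of \eqref{eq p} is the cutoff projection entering the usual formula for $\mu_X^G$, and that $\hat R(H)$ is completed so the displayed identification of the domain is literally correct.
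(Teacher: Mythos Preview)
Your proof is correct and follows essentially the same strategy as the paper: exploit the trivial $H$-action to write all the ingredients as external Kasparov products over the $C^*H$-factor, so that the index factors as $\mu_X^G(a)$ times a scalar coming from the pairing of $[1_H]$ with the $V$-block. The only notable difference is bookkeeping on the $C^*H$-side: you identify $j^H[\tilde p]$ with $[p]\boxtimes \id_{C^*H}$ and carry the $V$-dependence in the other descended factor via $\beta_V\in KK(C^*H,\C)$, whereas the paper pushes the $V$-dependence into a claimed identification of $j^H[V]$ as a projection in $KK(C^*H,C^*H)\cong \End_{\Z}(R(H))$. Your route is arguably cleaner, since the statement $j^H(z)=z\boxtimes \id_{C^*H}$ for a trivially equivariant $z$ is immediate from the definition of descent, and the final orthogonality $[1_H]\otimes_{C^*H}\beta_V=\delta_{V,1_H}$ is transparent.
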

\begin{proof}
It follows from the definition of the descent map and the Peter--Weyl theorem that
the descent map
\[
j^H\colon R(H) = KK_0^H(\C, \C) \to KK_0(C^*H, C^*H) = \End_{\Z}(R(H))
\]
maps $[V] \in \hat H$ to the projection map $\proj_V$ onto $\Z[V]$. Now $X/H = X$,  $\tilde c = c$ and $\tilde p = p$, so the class 
\[
[\tilde p] \in KK^H_0(\C, C_0(X)\rtimes G) = KK_0(\C, C_0(X)\rtimes G)  \otimes R(H)
\]
equals $[p] \boxtimes [1_H]$, where $\boxtimes$ is the external Kasparov product. We find that
\[
j^H[\tilde p] = [p] \boxtimes \proj_{1_H} \in KK_0(\C, C_0(X)\rtimes G)  \otimes \End_{\Z} (R(H)).
\]

Now for $a \in KK_*^G(C_0(X), \C)$ and $V \in \hat H$,
\[
j^G(a \otimes [V]) = j^G(a) \boxtimes [V] \in KK_*^G(C_0(X), \C) \otimes \hat R(H).
\]
So
\[
\ind_G(a \otimes V) = [p] \otimes_{C_0(X)\rtimes G} j^G(a) \boxtimes \langle [V], \proj_{1_H}([1_H])\rangle,
\]
which implies the claim. The angular brackets denote the pairing between $\hat R(H) = \Hom_{\Z}(R(H), \Z)$ and $R(H)$.
\end{proof}

\subsection{Free actions by $H$}

If $H$ acts freely on $\tilde M$, then $M$ is a smooth manifold. In that case, the index of Definition \ref{def index} reduces to the analytic assembly map, see Proposition \ref{prop assembly}. In this sense, the $G$-equivariant index generalises the analytic assembly map to orbifolds. If, furthermore, $G = \Gamma$ is discrete, then $M/\Gamma$ is an orbifold. Even if $\Gamma$ acts freely on $\tilde M$, its action on $M$ is not necessarily free. (Similarly, the action by $H$ on $\tilde M /\Gamma$ is not free in general.) This leads to two different ways to realise orbifold indices on spaces of the type that includes compact locally symmetric spaces, Corollary \ref{cor H free}, which is based on  Theorem \ref{thm H free}. We work out the example of locally symmetric spaces in Subection \ref{sec loc symm}.


If $H$ acts freely on $\tilde M$, then $D$ is an elliptic differential operator on the smooth vector bundle $E \to M$ in the usual sense. Then it defines a $K$-homology class
\[
[D] := \bigl[L^2(E), \frac{D}{\sqrt{D^2+1}}, \pi_{M}\bigr]\in KK_G(C_0(M),\C),
\]
where $\pi_M$ is defined by pointwise multiplication.
\begin{proposition}\label{prop assembly}
If $H$ acts freely on $\tilde M$, then
\[
\ind_G(D) = \mu_M^G[D].
\]
\end{proposition}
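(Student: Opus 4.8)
The plan is to use Green's imprimitivity theorem to replace $C_0(\tilde M)\rtimes H$ by $C_0(M)$, and then to check that under the resulting Morita equivalence the $H$-transversally elliptic operator $\tilde D$ corresponds to the elliptic operator $D$, and the cut-off data on $\tilde M$ to the cut-off data on $M$. Since $H$ is compact and acts freely, $q\colon\tilde M\to M$ is a principal $H$-bundle, and $C_0(\tilde M)\rtimes H$ is Morita equivalent to $C_0(M)$; write $\cM$ for the imprimitivity bimodule (a completion of $C_c(\tilde M)$, with $C_0(M)$-valued inner product given by integration over the $H$-orbits and the tautological left action of $C_0(\tilde M)\rtimes H$). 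As the $G$-action commutes with the $H$-action, $G$ acts compatibly on $\cM$, so $\cM$ represents an invertible class $[\cM]\in KK^G(C_0(\tilde M)\rtimes H,C_0(M))$, and its descent $j^G[\cM]\in KK(C_0(\tilde M)\rtimes(G\times H),C_0(M)\rtimes G)$ is again invertible, represented by $\cM\rtimes G$.

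The first and main step is the identity
\[
[\tilde D]=[\cM]\otimes_{C_0(M)}[D]\qquad\text{in }KK^G_0(C_0(\tilde M)\rtimes H,\C).
\]
We may take $\tilde D=q^*D$ (the Remark after Definition \ref{def ell}; any transversally elliptic lift of $D$ yields the same class). Using $\tilde E=q^*E$, the map $\xi\otimes s\mapsto\bigl(\tilde m\mapsto\xi(\tilde m)\,s(q(\tilde m))\bigr)$ identifies $\cM\otimes_{C_0(M)}L^2(E)$ with $L^2(\tilde E)$ and the left $C_0(\tilde M)\rtimes H$-module structure with the representation $\pi_{\tilde M,H}$ of \eqref{eq tilde D KK}. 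Under this identification one verifies that $F_{\tilde D}:=\tilde D/\sqrt{\tilde D^2+1}$ is an $F_D$-connection and satisfies Kasparov's positivity condition, so that $(L^2(\tilde E),F_{\tilde D},\pi_{\tilde M,H})$ represents the interior product $[\cM]\otimes_{C_0(M)}[D]$. This is the analogue for principal $H$-bundles of the fact that the pullback of an elliptic operator along a submersion is a connection for it, and it is here that freeness of $H$ enters: it makes $\tilde D$ elliptic transverse to the $H$-orbits, i.e.\ along the $M$-directions, which is exactly the ellipticity needed for the Kasparov product to be defined.

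Granting the identity, the remainder is formal. By compatibility of $j^G$ with Kasparov products (\cite[3.11]{Kasparov88}), $j^G[\tilde D]=j^G[\cM]\otimes_{C_0(M)\rtimes G}j^G[D]$, so Definition \ref{def index} gives
\[
\ind_G(D)=\Bigl([1_H]\otimes_{C^*H}j^H[\tilde p]\otimes_{C_0(\tilde M)\rtimes(G\times H)}j^G[\cM]\Bigr)\otimes_{C_0(M)\rtimes G}j^G[D],
\]
while $\mu_M^G[D]=[p]\otimes_{C_0(M)\rtimes G}j^G[D]$; it thus suffices to show the bracketed class equals $[p]$ in $K_0(C_0(M)\rtimes G)$. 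Writing $[\tilde p]=(\phi_{\tilde p})_*[1_H]$ for the $H$-equivariant homomorphism $\phi_{\tilde p}\colon\C\to C_0(\tilde M)\rtimes G$, $z\mapsto z\tilde p$, and using $j^H[1_H]=\proj_{1_H}$ together with functoriality of descent, one gets $[1_H]\otimes_{C^*H}j^H[\tilde p]=[\tilde p\,\epsilon]$, where $\epsilon$ is the image in the multiplier algebra of $C_0(\tilde M)\rtimes(G\times H)$ of the trivial-representation projection of $C^*H$, and $\tilde p\,\epsilon$ is a genuine projection because $\tilde p$ is $H$-invariant. Since $\tilde c=q^*c$, we have $\tilde p=q_G^*p$ with $q_G^*\colon C_0(M)\rtimes G\to C_0(\tilde M)\rtimes G$ the pullback; and $j^G[\cM]$ coincides with the corner Morita equivalence $C_0(\tilde M)\rtimes(G\times H)\sim\epsilon\bigl(C_0(\tilde M)\rtimes(G\times H)\bigr)\epsilon=\bigl(C_0(\tilde M)\rtimes G\bigr)^H=C_0(M)\rtimes G$, under which $q_G^*p\mapsto p$. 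Hence $[\tilde p\,\epsilon]\otimes_{C_0(\tilde M)\rtimes(G\times H)}j^G[\cM]=[p]$, and combining the displays yields $\ind_G(D)=\mu_M^G[D]$.

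The main obstacle is the identity in the second paragraph: checking that the interior Kasparov product of Green's imprimitivity bimodule with $[D]$ is represented by the lifted operator $\tilde D$, that is, verifying the $F_D$-connection and positivity conditions for $F_{\tilde D}$ on $\cM\otimes_{C_0(M)}L^2(E)$ while keeping track of the fact that $\tilde D$ is only transversally elliptic. The identifications in the last paragraph are standard but also need some care with the normalisations of the Haar measure on $H$ and the Riemannian densities on $\tilde M$ and $M$, and with checking that the descended Green bimodule really agrees with the corner bimodule.
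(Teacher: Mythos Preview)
Your proposal is correct and follows the same two-step strategy as the paper: first prove $[\tilde D]=[\cM]\otimes_{C_0(M)}[D]$ in $KK^G(C_0(\tilde M)\rtimes H,\C)$ (this is Lemma~\ref{lem ME}), then show that $[1_H]\otimes_{C^*H}j^H[\tilde p]\otimes j^G[\cM]=[p]$ (this is Lemma~\ref{lem p}), and combine.

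The only substantive difference is in how the Kasparov product $[\cM]\otimes_{C_0(M)}[D]$ is identified. You propose to work in the bounded picture and verify that $F_{\tilde D}$ is an $F_D$-connection together with positivity; the paper instead passes to the unbounded picture and applies Kucerovsky's criterion \cite[Theorem~13]{Kucerovsky97}. The payoff of the unbounded approach is that, since the operator in the cycle $(\cM,0,\pi_{\cM})$ is zero, two of Kucerovsky's three conditions are vacuous, and the remaining commutator condition reduces to boundedness of $\tilde D\circ T_\varphi-T_\varphi\circ D=\sigma_{\tilde D}(d\varphi)\otimes 1$ for $\varphi\in C^\infty_c(\tilde M)$, which is immediate since $\tilde D$ is first order. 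This sidesteps the functional-calculus manipulations needed to check the connection condition for the bounded transforms directly, and in particular the worry you flag about $\tilde D$ being only transversally elliptic disappears: one never has to invoke ellipticity at this step, only the first-order property. For the projection step, your corner-embedding description of $j^G[\cM]$ via the trivial-representation projection $\epsilon$ is a valid alternative to the paper's more hands-on identification $\tilde p_H(C_0(\tilde M)\rtimes(G\times H))\otimes_{C_0(\tilde M)\rtimes(G\times H)}\cM_G=p(C_0(M)\rtimes G)$; the two are equivalent presentations of the same Morita computation.
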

This proposition will be proved in Section \ref{sec assembly}. 

Consider, for the moment, the case where $G = \Gamma$ is discrete, and $H = \{e\}$ is trivial. Then $M = \tilde M$, $\tilde D = D$ is elliptic, and $\tilde M/\Gamma = M/\Gamma$ is an orbifold. Consider the $*$-homomorphism
\[
\textstyle{\sum_{\Gamma}}\colon C^*_{\max}\Gamma \to \C
\]
given on $l^1(\Gamma) \subset C^*_{\max}\Gamma$ by summing over $\Gamma$. Let $[\sum_{\Gamma}] \in KK(C^*_{\max}\Gamma, \C)$ be the corresponding class.
If we use maximal crossed products and group $C^*$-algebras, then
 Proposition \ref{prop assembly},  and Theorem 2.7 and  Proposition D.3 in \cite{Mathai10}, imply that
\beq{eq sum Gamma}
\ind_{\Gamma}(D)  \otimes_{C^*_{\max}\Gamma} [\textstyle{\sum_{\Gamma}}] =
\bigl(\textstyle{\sum_{\Gamma}}\bigr)_*\ind_{\Gamma}(D) = \dim(\ker(D^+)^{\Gamma}) - \dim(\ker(D^-)^{\Gamma}).
\eeq
The right hand side is the index
of the operator $D_{\Gamma}$ on the compact orbifold $M/\Gamma$ induced by $D$. This gives another realisation of the orbifold index in terms of the index of Definition \ref{def index}.

This construction applies more
 generally. Let $I_G\colon C^*_{\max}(G) \to \C$ be the continuous extension of the integration map on $L^1(G)$, and $[I_G] \in KK(C^*_{\max}G, \C)$ the corresponding $KK$-class.
 \begin{theorem} \label{thm H free}
 Suppose that $G$ is unimodular and that $H$ acts freely on $\tilde M$.
The multiplicity of every irreducible representation of $H$ in $\ker(\tilde D)^G$ is finite, and
\beq{eq H free}
\ind_{H,G}(\tilde D) \otimes_{C^*_{\max}G} [I_G]= [\ker(\tilde D^+)^G] -  [\ker(\tilde D^-)^G]  \quad \in KK(C^*H, \C) = \hat R(H).
\eeq
\end{theorem}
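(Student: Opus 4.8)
The strategy I would follow is to decompose everything according to the unitary dual $\hat H$ of the compact group $H$, reducing \eqref{eq H free} to the discrete-group identity \eqref{eq sum Gamma} applied to a family of twisted elliptic operators on the smooth manifold $M$.

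Since $H$ acts freely, $q\colon \tilde M \to M$ is a principal $H$-bundle, and by Peter--Weyl one has an $H$-equivariant orthogonal decomposition $L^2(\tilde E) = \bigoplus_{\chi \in \hat H} V_\chi \otimes L^2(E_\chi)$ (Hilbert-space direct sum), where $E_\chi \to M$ is a genuine $G$-equivariant, $\Z_2$-graded vector bundle --- essentially $E$ twisted by the associated bundle of $q$ along $V_\chi$ --- and where, because $\tilde D$ is $H$-transversally elliptic, its restriction to the $\chi$-isotypical part is a $G$-equivariant, odd, self-adjoint \emph{elliptic} differential operator $D_\chi$ on $E_\chi$ in the usual sense. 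Correspondingly, since $C^*H$ is a $c_0$-direct sum $\bigoplus_\chi \End(V_\chi)$, we have $KK_*(C^*H, C^*G) = \prod_\chi K_*(C^*G)$, and the key claim is that under this identification the $\chi$-component of $\ind_{H,G}(\tilde D)$ is $\ind_G(D_\chi)$; equivalently $[V_\chi] \otimes_{C^*H} \ind_{H,G}(\tilde D) = \ind_G(D_\chi)$ for every $\chi$, the case $\chi = 1_H$ being Definition \ref{def index}. I would prove this by a direct computation with the descent maps $j^H, j^G$ and the explicit Morita bimodule for the free $H$-action ($C_0(\tilde M)\rtimes G \sim_{\mathrm{Morita}} C_0(M)$), isolating the $\chi$-isotypical summand of the cutoff class $j^H[\tilde p]$ and of the symbol class \eqref{eq tilde D KK} of $\tilde D$.

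Granting this, each $D_\chi$ is an elliptic operator of the type considered in Proposition \ref{prop assembly} (the $H$-action being free), so $\ind_G(D_\chi) = \mu_M^G[D_\chi]$. Pairing with $[I_G]$ and applying Theorem 2.7 and Proposition D.3 of \cite{Mathai10} --- the unimodular-group counterpart of the computation underlying \eqref{eq sum Gamma} --- yields
\[
\ind_G(D_\chi)\otimes_{C^*_{\max}G}[I_G] = \mu_M^G[D_\chi]\otimes_{C^*_{\max}G}[I_G] = \dim\ker(D_\chi^+)^G - \dim\ker(D_\chi^-)^G,
\]
where $\ker(D_\chi^\pm)^G$ denotes the (automatically smooth) $G$-invariant solutions, equivalently the kernel of the operator induced by $D_\chi$ on the compact quotient $M/G$; in particular these spaces are finite-dimensional. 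Reassembling over $\hat H$ --- a class in $KK(C^*H,\C) = \hat R(H)$ is determined by its pairings with the $[V_\chi]$, and $[V_\chi]\otimes_{C^*H}\bigl(\ind_{H,G}(\tilde D)\otimes_{C^*_{\max}G}[I_G]\bigr) = \ind_G(D_\chi)\otimes_{C^*_{\max}G}[I_G]$ --- gives that every $V_\chi$ occurs with finite multiplicity in $\ker(\tilde D)^G = \bigoplus_\chi V_\chi \otimes \ker(D_\chi)^G$, and that $\ind_{H,G}(\tilde D)\otimes_{C^*_{\max}G}[I_G] = [\ker(\tilde D^+)^G] - [\ker(\tilde D^-)^G]$ in $\hat R(H)$, which is \eqref{eq H free}.

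I expect the main obstacle to be the identity $[V_\chi]\otimes_{C^*H}\ind_{H,G}(\tilde D) = \ind_G(D_\chi)$: one must check that the descent maps, the cutoff idempotent $\tilde p$, and the Kasparov class \eqref{eq tilde D KK} all interact with the $\hat H$-decomposition as expected; equivalently, that descent along $G$ followed by evaluation against the trivial representation $[I_G]$ carries the transversally elliptic class of $\tilde D$ to the class of its $G$-descended operator on the compact quotient --- the $H$-equivariant analogue of what is already used for discrete $G$ in \eqref{eq sum Gamma}. Unimodularity of $G$ enters precisely here, to make $[I_G]$ a well-behaved (trace-like) character and the descended operator formally self-adjoint for the quotient measure. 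A secondary point is the case in which $G$ does not act freely, where $M/G$ is singular and the smooth index theory on $M/G$ must be replaced by the orbifold index theory of \cite{Atiyah74, Vergne96, Kawasaki81}; alternatively one can avoid $M/G$ altogether by working with the $H$-transversally elliptic operator induced by $\tilde D$ on the compact quotient $\tilde M/G$ (a manifold when $G$ acts freely) and invoking Atiyah's transversally elliptic index directly.
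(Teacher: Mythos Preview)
Your proposal is correct and follows essentially the same route as the paper: the key identity $[V_\chi]\otimes_{C^*H}\ind_{H,G}(\tilde D) = \mu_M^G[D_\chi]$ is exactly Proposition~\ref{prop DV} (proved there via an auxiliary $H\times H$-action rather than a direct Morita computation), and from that point both arguments invoke Theorem~2.7 and Proposition~D.3 of \cite{Mathai10} and reassemble over $\hat H$. Your secondary worry about $M/G$ being singular is unnecessary, since those results apply to proper cocompact $G$-actions and never require passing to the quotient $M/G$.
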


In the setting of Theorem \ref{thm H free}, we denote the right hand side of \eqref{eq H free} by $\ind_H(\tilde D^G)$. If the action by $G$ on $\tilde M$ is free, then this is the index of the transversally elliptic operator $\tilde D^G$ on $\Gamma^{\infty}(\tilde E/G) = \Gamma^{\infty}(\tilde E)^G$ induced by $\tilde D$.
 \begin{corollary} \label{cor H free}
If $G$ is unimodular and $H$ acts freely on $\tilde M$, then $\ker(\tilde D)^{H \times G}$ is finite-dimensional, and
 \beq{eq H G inv}
 \begin{split}
 \dim(\ker(\tilde D^+)^{H \times G}) -  \dim(\ker(\tilde D^-)^{H \times G}) &=[1_H] \otimes_{C^*H}\ind_{H,G}(\tilde D) \otimes_{C^*_{\max}G} [I_G] \\
&= (I_{G})_*(\ind_{G}(D)) \\
&= [1_H] \otimes \ind_{H}(\tilde D^{G}).
 \end{split}
 \eeq
\end{corollary}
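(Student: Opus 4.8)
The plan is to extract everything from Theorem~\ref{thm H free} by composing with the Kasparov product with $[1_H]$, together with associativity of the product; no new analysis should be needed. First I would dispose of finite-dimensionality: Theorem~\ref{thm H free} says every irreducible representation of $H$, in particular the trivial one, occurs with finite multiplicity in $\ker(\tilde D)^G$. Since the $H$- and $G$-actions on $\tilde M$ commute, $\ker(\tilde D^{\pm})^{H\times G} = \bigl(\ker(\tilde D^{\pm})^G\bigr)^H$, and the dimension of this space is the multiplicity of the trivial representation of $H$ in $\ker(\tilde D^{\pm})^G$; hence it is finite, and $\ker(\tilde D)^{H\times G}$ is finite-dimensional.

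Next I would prove the first equality in \eqref{eq H G inv} by applying $[1_H]\otimes_{C^*H}(-)\colon KK(C^*H,\C)=\hat R(H)\to KK(\C,\C)=\Z$ to both sides of \eqref{eq H free}. By associativity of the Kasparov product the left-hand side turns into $[1_H]\otimes_{C^*H}\ind_{H,G}(\tilde D)\otimes_{C^*_{\max}G}[I_G]$, the first expression in \eqref{eq H G inv}. For the right-hand side, under the identification $KK(C^*H,\C)=\hat R(H)$ and the description of the pairing with $R(H)=KK(\C,C^*H)$ recalled in the proof of Lemma~\ref{lem H triv}, the map $[1_H]\otimes_{C^*H}(-)$ extracts the multiplicity of the trivial representation of $H$ (equivalently, passes to the $H$-invariant part, as noted after Definition~\ref{def index}); applied to $[\ker(\tilde D^+)^G]-[\ker(\tilde D^-)^G]$ it yields $\dim\bigl(\ker(\tilde D^+)^G\bigr)^H-\dim\bigl(\ker(\tilde D^-)^G\bigr)^H$, which by the previous paragraph is the left-hand side of \eqref{eq H G inv}.

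The remaining two equalities are then formal. By Definition~\ref{def index}, $\ind_G(D)=[1_H]\otimes_{C^*H}\ind_{H,G}(\tilde D)$, and since $(I_G)_*$ is the map on $K$-theory induced by the $*$-homomorphism $I_G$, i.e.\ the Kasparov product with $[I_G]$, associativity gives
\[
(I_G)_*\ind_G(D)=\ind_G(D)\otimes_{C^*_{\max}G}[I_G]=[1_H]\otimes_{C^*H}\ind_{H,G}(\tilde D)\otimes_{C^*_{\max}G}[I_G],
\]
which is the second equality. For the third, recall that $\ind_H(\tilde D^G)$ is by definition the right-hand side of \eqref{eq H free}, that is, $\ind_{H,G}(\tilde D)\otimes_{C^*_{\max}G}[I_G]$; hence $[1_H]\otimes\ind_H(\tilde D^G)=[1_H]\otimes_{C^*H}\ind_{H,G}(\tilde D)\otimes_{C^*_{\max}G}[I_G]$, again by associativity, which is the first expression in \eqref{eq H G inv}.

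I do not expect a genuine obstacle here: the only step carrying content beyond this bookkeeping is the identification of $[1_H]\otimes_{C^*H}(-)$ on $KK(C^*H,\C)=\hat R(H)$ with ``take the multiplicity of the trivial representation of $H$'', which is precisely the pairing computation made in the proof of Lemma~\ref{lem H triv}, together with the finiteness statement supplied by Theorem~\ref{thm H free}. So the corollary is essentially a formal consequence of Theorem~\ref{thm H free} and Definition~\ref{def index}; the main care needed is simply to keep track of which crossed products and group $C^*$-algebras are used (maximal on the $G$-side, so that $[I_G]$ is defined) and to invoke associativity of the Kasparov product correctly.
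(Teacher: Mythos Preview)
Your argument is correct and follows essentially the same route as the paper: both derive all the equalities from Theorem~\ref{thm H free} via associativity of the Kasparov product and Definition~\ref{def index}. The only cosmetic difference is that the paper obtains the leftmost dimension formula by re-invoking \cite{Mathai10} for the operator $D$ on the smooth manifold $M$ (via Proposition~\ref{prop assembly}), whereas you read it off directly by pairing $[1_H]$ with the right-hand side of~\eqref{eq H free}; your version is slightly more self-contained.
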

If $G = \Gamma$ is discrete, and $H$ acts locally freely, then $\tilde M/(H \times \Gamma)$ is an orbifold, and the left hand side of \eqref{eq H G inv} is the orbifold index of
 the operator on $\tilde M/(H \times \Gamma)$ induced by $\tilde D$.
 Then Corollary \ref{cor H free} shows that $\ind_{H,G}(\tilde D)$ is a common refinement of the two indices $\ind_{G}(D)$ and $\ind_{H}(\tilde D^{G})$,  which refine the orbifold index of the operator $\tilde D^{H \times \Gamma}$ on $\tilde M/(H \times \Gamma)$ induced by $\tilde D$ in two different ways. In this sense, $\ind_{H,G}(\tilde D)$ contains the most refined index-theoretic information about $\tilde D^{H \times \Gamma}$.
 This applies for example in the case of compact locally symmetric spaces, see Section \ref{sec loc symm}.

Theorem \ref{thm H free} and Corollary \ref{cor H free} are deduced from Proposition \ref{prop assembly} in Section \ref{sec H free}.

\begin{remark} \label{rem On frame}
Let $M$ be a complete Riemannian manifold of dimension $n$ and let $\Gamma$ be a discrete group acting properly, cocompactly and isometrically on $M$.
Then $X := M/\Gamma$ is a compact orbifold.
Let $P$ be the $\OO(n)$-frame bundle of $X$.
Denote $H=\OO(n)$. Then
$P$ is a compact manifold acted on freely by $H$.
One can lift the $H$-frame bundle $P$ from $X$ to $M$ to obtain a principal $H$-bundle $\tilde M$ over $M$.
Then $\tilde M$ has free, commuting actions by $H$ and $\Gamma$, and
\[
X=P/H = \tilde M/(\Gamma \times H).
\]

Let $D_X$ be an elliptic differential operator on $X$. It can be realised as either a $\Gamma$-equivariant elliptic differential operator $D_M^{\Gamma}$ on $M$, or an $H$-transversally elliptic operator $D_P^H$ on $P$. These two operators have a common lift to an $H \times \Gamma$-equivariant, $H$-transversally elliptic operator $\tilde D$ on $\tilde M$. Corollary \ref{cor H free} implies that the orbifold index of $D_X$ can be obtained from the $(H,\Gamma)$-index of $\tilde D$ as
\begin{multline*}\label{eq ind DX}
\ind(D_X) = [1_H] \otimes_{C^*H} \ind_{H,\Gamma}(\tilde D) \otimes_{C^*_{\max}\Gamma} [\sumGam]\\
= (\sumGam)_*(\ind_{\Gamma}(D_M^{\Gamma}))
= [1_H] \otimes_{C^*H} \ind_H(D_P^H).
\end{multline*}

It is an interesting question in what way the contributions to $\ind(D_X)$ from singularities in the quotient of $M$ by $\Gamma$ or the quotient of $P$ by $H$ are related.

\end{remark}

\section{Proofs of the results}

\subsection{The analytic assembly map}\label{sec assembly}

Let us prove Proposition \ref{prop assembly}. Suppose that $H$ acts freely on $\tilde M$, so that $M$ is smooth. Then we have a Morita equivalence bimodule $\cM$ between $C_0(\tilde M) \rtimes H$ and $C_0(M)$, see Situation 2 in \cite{Rieffel82}. This is a left $C_0(\tilde M)\rtimes H$ and right Hilbert $C_0(M)$-bimodule, defined as the completion of $C_c(\tilde M)$ in the inner product
\[
(\varphi_1, \varphi_2)_{C_0(M)}(m) = \int_{H}\bar \varphi_1(hm)\varphi_2(hm)\, dh,
\]
for $\varphi_1, \varphi_2 \in C_c(\tilde M)$ and $m \in M$. The right action by $C_0(M)$ on $\cM$ is defined by
pointwise multiplication after pullback along $q$. The left action by $C_0(\tilde M)\rtimes H$, denoted by $\pi_{\cM}$, is defined by the standard actions by $C_0(\tilde M)$ and $H$ on $C_c(\tilde M)$. This yields an invertible class
\[
[\cM] := [\cM, 0, \pi_{\cM}] \in KK_G(C_0(\tilde M) \rtimes H, C_0(M)).
\]
\begin{lemma} \label{lem ME}
We have
\[
[\tilde D] = [\cM] \otimes_{C_0(M)} [D] \quad \in KK_G(C_0(\tilde M) \rtimes H, \C).
\]
\end{lemma}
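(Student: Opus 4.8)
The plan is to verify the claimed Kasparov product identity $[\tilde D] = [\cM] \otimes_{C_0(M)} [D]$ by computing the right-hand side explicitly and matching it with the cycle representing $[\tilde D]$ in \eqref{eq tilde D KK}. The key observation is that the Morita bimodule $\cM$ is built from $C_c(\tilde M)$, so the internal tensor product $\cM \otimes_{C_0(M)} L^2(E)$ is naturally identified with $L^2(\tilde E)$ (using that $\tilde E = q^*E$): a section of $\tilde E$ over $\tilde M$, which is $L^2$ with respect to the pushed-down inner product, corresponds exactly to an element of $\cM \otimes_{C_0(M)} L^2(E)$. Under this identification, the left action of $C_0(\tilde M) \rtimes H$ on $\cM \otimes_{C_0(M)} L^2(E)$ coming from $\pi_{\cM}$ becomes precisely $\pi_{\tilde M, H}$, the representation by pointwise multiplication and the unitary $H$-action used to define $[\tilde D]$.

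With the Hilbert space and representation matched, the remaining task is to show that the operator $\frac{D}{\sqrt{D^2+1}}$ on $L^2(E)$ lifts, through the connection implicit in the Morita module, to an operator that is a $\pi_{\tilde M,H}(C_0(\tilde M)\rtimes H)$-compact perturbation of $\frac{\tilde D}{\sqrt{\tilde D^2+1}}$, and that this operator is an $F$-connection for $\frac{D}{\sqrt{D^2+1}}$ in Kasparov's sense. First I would invoke the connection formalism: since $\cM$ carries a natural flat connection for this Morita situation (the identification $\cM \otimes_{C_0(M)} L^2(E) \cong L^2(\tilde E)$ is an isometry commuting with the relevant structure), the lift of $\frac{D}{\sqrt{D^2+1}}$ is simply the operator induced by $D$ on $\Gamma^\infty(\tilde E)^{\text{(locally)}}$, namely $\tilde D$ restricted appropriately — and by Definition \ref{def ell} together with the remark on pullbacks of first-order operators, this is exactly the lift $\tilde D$ fixed above (one may take $\tilde D = q^*D$ when $H$ acts freely). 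Then I would appeal to the standard fact that bounded transforms $\frac{\tilde D}{\sqrt{\tilde D^2+1}}$ and $\frac{D}{\sqrt{D^2+1}}$ are compatible under this identification up to the required compactness, so that the triple $(\cM \otimes L^2(E), \text{(lifted operator)}, \pi_{\cM}\otimes 1)$ represents the Kasparov product and coincides on the nose with the cycle for $[\tilde D]$. Equivariance with respect to $G$ is automatic since all identifications are $G$-equivariant.

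The main obstacle I anticipate is the verification of the connection and compactness conditions in Kasparov's product: one must check that commutators of the lifted operator with $\pi_{\cM}(a)$ for $a \in C_c(\tilde M) \rtimes H$ are compact, and that the positivity (graded commutator) condition holds. Because the Morita module is finitely generated projective only locally — $\tilde M \to M$ is a principal $H$-bundle, not a trivial one — the connection is not literally trivial, and one has to work in local trivialisations and patch, or argue abstractly that the canonical isometry $\cM \otimes_{C_0(M)} L^2(E) \cong L^2(\tilde E)$ intertwines the operators up to lower-order (hence compact, after taking bounded transforms) terms. This is where the bulk of the technical care goes; everything else is a matter of unwinding definitions.
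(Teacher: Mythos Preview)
Your outline is correct in spirit: the identification $\cM \otimes_{C_0(M)} L^2(E) \cong L^2(\tilde E)$ via $\varphi \otimes s \mapsto \varphi\, q^*s$, together with the matching of the left $C_0(\tilde M)\rtimes H$-action, is exactly what the paper does, and from there one must indeed verify that $\tilde D$ (or its bounded transform) represents the product. Where you diverge from the paper is in staying in the bounded picture and proposing to check Kasparov's connection and positivity conditions directly for $\frac{D}{\sqrt{D^2+1}}$ and $\frac{\tilde D}{\sqrt{\tilde D^2+1}}$. That is doable but, as you yourself anticipate, it is where the genuine work would be.

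The paper avoids this entirely by passing to the \emph{unbounded} picture and invoking Kucerovsky's criterion (Theorem~13 in \cite{Kucerovsky97}) for unbounded Kasparov products. Because the operator in the cycle $(\cM)$ is zero, two of Kucerovsky's three conditions---the resolvent-compatibility condition and the positivity condition---are vacuous. The only nontrivial check is that for each $\varphi \in C^\infty_c(\tilde M)$ the commutator $\tilde D \circ T_\varphi - T_\varphi \circ D$ (and its adjoint) is bounded, where $T_\varphi$ is the operator $s \mapsto \varphi\, q^*s$. Since $\tilde D$ is first order, this commutator is just $\sigma_{\tilde D}(d\varphi) \otimes 1$, a bounded endomorphism. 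So the whole proof collapses to a one-line symbol computation. Your bounded-transform approach would ultimately rely on the same first-order property, but filtered through functional calculus estimates; the unbounded route buys you a direct and essentially trivial verification with no need for local trivialisations or patching.
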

\begin{proof}
We use the unbounded picture of $KK$-theory \cite{Baaj83, Kucerovsky97, Mesland14}. Denoting sets of unbounded $KK$-cycles by the letter $\Psi$, we have
\[
\begin{split}
(\tilde D) = (L^2(\tilde E), \tilde D, \pi_{\cM}) &\in \Psi_G(C_0(\tilde M) \rtimes H,\C);\\
(D) = (L^2(E), D, \pi_{M}) &\in \Psi_G(C_0(M),\C);\\
(\cM) = (\cM, 0, \pi_{M,H}) &\in \Psi_G(C_0(\tilde M) \rtimes H, C_0(M)).
\end{split}
\]
We will show that
\beq{eq kas prod}
(\tilde D) = (\cM) \otimes_{C_0(M)} (D).
\eeq

First note that we have an isomorphism of $C_0(\tilde M) \rtimes H$-modules
\[
\cM \otimes_{C_0(M)}L^2(E) \xrightarrow{\cong}L^2(\tilde E),
\]
mapping $\varphi \otimes s$ to $\varphi q^*s$, for $\varphi \in C_c(\tilde M)$ and $s \in L^2(E)$.
Now Theorem 13 in \cite{Kucerovsky97} states that the equality \eqref{eq kas prod} holds if
\begin{enumerate}
\item for all $\varphi \in C^{\infty}_c(\tilde M)$, the operators
\[
\begin{split}
\tilde D \circ T_{\varphi}-T_{\varphi}\circ D\colon & \Gamma^{\infty}_c(E) \to L^2(\tilde E) \quad \text{and}\\
D \circ T_{\varphi}^*-T_{\varphi}^*\circ \tilde D\colon & \Gamma^{\infty}_c(\tilde E) \to L^2(E)
\end{split}
\]
are bounded, where $T_{\varphi}$ denotes tensoring with $\varphi$;
\item the resolvent of $\tilde D$ is compatible with the zero operator in the sense of Lemma 10 in \cite{Kucerovsky97}, which is a vacuous condition; and
\item a positivity condition that trivially holds because the operator in the cycle $(\cM)$ is zero.
\end{enumerate}
To verify the first condition, we note that, since $\tilde D$ is a first order operator,
\[
\tilde D \circ T_{\varphi}-T_{\varphi}\circ D = \sigma_{\tilde D}(d\varphi) \otimes 1,
\]
which is a bounded operator. And $D \circ T_{\varphi}^*-T_{\varphi}^*\circ \tilde D$ is minus the adjoint of the above operator, hence also bounded.
\end{proof}

Next, consider the maps
\begin{multline} \label{eq composition}
[\tilde p] \in  KK_H(\C, C_0(\tilde M)\rtimes G)\xrightarrow{j^H}
 KK(C^*H, C_0(\tilde M)\rtimes (G\times H))\\
 \xrightarrow{\relbar \otimes_{C_0(\tilde M) \rtimes(G\times H)} j^G[\cM]}
 KK(C^*H, C_0(M)\rtimes G)\xrightarrow{[1_H] \otimes_{C^*H} \relbar}
 KK(\C, C_0(M)\rtimes G) \ni [p].
\end{multline}
\begin{lemma} \label{lem p}
The composition of the maps \eqref{eq composition} maps the class $[\tilde p]$ to $[p]$.
\end{lemma}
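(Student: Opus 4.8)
The plan is to track the class $[\tilde p]$ explicitly through the three maps in \eqref{eq composition} and identify the result with $[p]$ by a direct computation at the level of Kasparov cycles. I would work in the unbounded (or bounded, since the operators here are zero) picture, representing everything concretely on functions on $\tilde M$ and $M$.

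First I would unwind $j^H[\tilde p]$. The class $[\tilde p]$ is given by the idempotent $\tilde p \in C_0(\tilde M)\rtimes G$, which is $H$-equivariant; applying the descent $j^H$ turns it into a cycle over $(C^*H, C_0(\tilde M)\rtimes(G\times H))$ whose underlying module is (a completion of) $C_c(H) \otimes (C_0(\tilde M)\rtimes G)$, with the idempotent $\tilde p$ acting by right multiplication after the canonical inclusion into the multiplier algebra. Next I would compose with $j^G[\cM]$: since $[\cM]=[\cM,0,\pi_{\cM}]$ has zero operator, the Kasparov product $j^H[\tilde p]\otimes j^G[\cM]$ is just the obvious internal tensor product of modules, giving a cycle over $(C^*H, C_0(M)\rtimes G)$ whose module is a completion of $C_c(H)\otimes \cM$ (with $\cM$ descended), and whose "idempotent" is the image of $\tilde p$ under the natural map. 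Here I would use the Morita picture: $\cM$ is the completion of $C_c(\tilde M)$, and under the identification the operator of multiplication by $\tilde c$ on $C_c(\tilde M)$ intertwines appropriately with multiplication by $c$ on $M$ after averaging over $H$, because $\tilde c = q^*c$. Finally, pairing with $[1_H]$ on the left takes the "$H$-invariant part", i.e. integrates the $C_c(H)$-variable against the trivial representation; this collapses the $C_c(H)$-factor and should produce precisely the module $C_0(M)\rtimes G$ with the idempotent $p(m,g)=c(m)c(g^{-1}m)$ acting, which is the cycle representing $[p]$.

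The cleanest way to organise the computation is probably to observe that $[\tilde p] = [\tilde c]\otimes[\tilde c]^*$ type structure, or more precisely to use that $\tilde p$ is the projection onto the submodule generated by $\tilde c \in C_c(\tilde M) \subset M(C_0(\tilde M)\rtimes G)$; then descent and Morita equivalence both commute with this "rank-one projection" description, so one only needs to check that $[1_H]\otimes j^H$ applied to the rank-one projector on $\tilde c$, pushed through $j^G[\cM]$, lands on the rank-one projector on $c$. That last identity is exactly the defining property of the cutoff function together with $(\varphi_1,\varphi_2)_{C_0(M)}(m) = \int_H \bar\varphi_1(hm)\varphi_2(hm)\,dh$ evaluated at $\varphi_1 = \varphi_2 = \tilde c$, which gives $\int_H \tilde c(hm)^2\,dh = \int_H c(m)^2\,dh = c(m)^2$ (not normalised yet — the $G$-integral is what normalises), so $\tilde c$ maps to $c$ under the module map $\cM \to C_0(M)$, and hence the projectors correspond.

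\textbf{Main obstacle.} The routine part is unwinding the definitions of $j^H$, $j^G$, the Morita module $\cM$, and the Kasparov product with a zero-operator cycle. The genuine bookkeeping difficulty — and where I would spend the most care — is keeping straight the interplay between the two group actions ($G$ acting by the crossed product, $H$ acting as the "$KK$-variable" carried by $C^*H$), making sure the descent maps $j^H$ and $j^G$ are applied in compatible conventions so that $C_0(\tilde M)\rtimes(G\times H)$ really is the intermediate algebra, and verifying that the pairing with $[1_H]$ (the $H$-averaging) interacts correctly with the $H$-averaging already built into the $C_0(M)$-valued inner product on $\cM$. One must check these two averagings do not "double count": concretely, that $[1_H]\otimes_{C^*H}(-)$ composed with $j^H$ is the identity on the $C_0(\tilde M)\rtimes G$-side after passing to $\cM$, which is ultimately the statement that $\cM\otimes_{C_0(M)}\overline{\cM}\cong C_0(\tilde M)\rtimes H$ and the trivial representation picks out the right corner. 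Once that is pinned down, the identification of the resulting idempotent with $p$ is immediate from the cutoff-function identities.
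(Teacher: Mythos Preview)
Your proposal is correct and follows essentially the same approach as the paper: track $[\tilde p]$ through the three maps at the level of Kasparov cycles with zero operator, using that the Kasparov product with $j^G[\cM]$ is just the module tensor product. The paper's argument is somewhat more streamlined than yours in two places: it writes $j^H[\tilde p]$ directly as $[\tilde p_H(C_0(\tilde M)\rtimes(G\times H)),0,\pi_H]$ with $\tilde p_H$ the constant-in-$H$ extension of $\tilde p$, and then invokes the fact that $\cM_G$ implements the Morita equivalence $C_0(\tilde M)\rtimes(G\times H)\sim C_0(M)\rtimes G$ to conclude in one line that $\tilde p_H(C_0(\tilde M)\rtimes(G\times H))\otimes\cM_G = p(C_0(M)\rtimes G)$, without the explicit cutoff computation you sketch. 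Your ``double-counting'' worry dissolves in this framing: after the Morita step the module is already over $C_0(M)\rtimes G$ (no $H$ left in the crossed product), the residual left $C^*H$-action $\pi_H$ is via the trivial $H$-action on $M$, and so pairing with $[1_H]$ simply replaces $\pi_H$ by $\pi_{\C}$ and changes nothing else.
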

\begin{proof}
We have
\[
[\tilde p] = [\tilde p(C_0(\tilde M)\rtimes G), 0, \pi_{\C}],
\]
where $\pi_{\C}$ is the  representation of $\C$ by scalar multiplication. Hence
\[
j^H[\tilde p] =[\tilde p_H (C_0(\tilde M)\rtimes (G\times H)), 0, \pi_{H}],
\]
where $\pi_H$ is the representation of $C^*H$ defined by convolution on $H$, and
 $\tilde p_H$ is the idempotent in $C_0(\tilde M)\rtimes (G\times H)$ defined by extending $\tilde p$ constant in the $H$-direction. (Recall that $H$ is compact.)

Now let $\cM_G$ be the $(C_0(\tilde M) \rtimes(G\times H), C_0(M)\rtimes G)$-bimodule constructed from $\cM$ as in the definition of $j^G$. Then $\cM_G$ implements the Morita equivalence between $C_0(\tilde M) \rtimes(G\times H)$ and $C_0(M)\rtimes G$, and 
\[
\tilde p_H (C_0(\tilde M)\rtimes (G\times H)) \otimes_{C_0(\tilde M) \rtimes(G\times H)}\cM_G = p(C_0(M)\rtimes G).
\]
So
\[
j^H[\tilde p] \otimes_{C_0(\tilde M) \rtimes(G\times H)} j^G[\cM] = [p(C_0(M)\rtimes G), 0, \pi_H].
\]
Finally, pairing with $[1_H]$ means replacing $\pi_H$ by $\pi_{\C}$, so the claim follows.
\end{proof}

\begin{proofof}{Proposition \ref{prop assembly}}
By one of the equivalent definitions of the analytic assembly map, and by Lemma \ref{lem p}, we have
\[
\begin{split}
\mu_M^G[D] &= [p] \otimes_{C_0(M)\rtimes G}j^G[D]\\
&=[1_H] \otimes_{C^*H} j^H[\tilde p] \otimes_{C_0(\tilde M) \rtimes(G\times H)} j^G[\cM]
 \otimes_{C_0(M)\rtimes G}j^G[D] \\
 &=[1_H] \otimes_{C^*H} j^H[\tilde p] \otimes_{C_0(\tilde M) \rtimes(G\times H)} j^G([\cM]
 \otimes_{C_0(M)}[D]).
\end{split}
\]
So the claim follows from Lemma \ref{lem ME}.
\end{proofof}

\subsection{Proofs of Theorem \ref{thm H free} and Corollary \ref{cor H free}} \label{sec H free}

We now deduce Theorem \ref{thm H free} from Proposition \ref{prop assembly} and the results in the appendix to \cite{Mathai10}, and then deduce Corollary \ref{cor H free} from Theorem \ref{thm H free}. We still assume that $H$ acts freely on $\tilde M$.

Let $V \in \hat H$. We will write
\beq{eq V1 V2}
\begin{split}
[V]_1 &\in KK(C^*H, \C);\\
[V]_2 &\in KK(\C, C^*H)
\end{split}
\eeq
for the classes defined by $V$.
Consider the elliptic operator $\tilde D \otimes 1_V$ on $\Gamma^{\infty}(\tilde E) \otimes V$.
Let $E_V \to M$ be the quotient of $\tilde E \otimes V$ by $H$. (Note that $M$ is smooth and $E_V$ is a well-defined vector bundle because $H$ acts freely on $\tilde M$.)
Let $D_V$ be the restriction of $\tilde D \otimes 1_V$ to
\[
\Gamma^{\infty}(M, E_V) = (\Gamma^{\infty}(\tilde E) \otimes V)^H.
\]
This is a $G$-equivariant, elliptic operator. The following result generalises Proposition \ref{prop assembly}, which we use in its proof.
\begin{proposition} \label{prop DV}
For all $V \in \hat H$,
\[
\mu_M^G[D_V] = [V]_2 \otimes_{C^*H} (\ind_{H,G}(\tilde D)).
\]
\end{proposition}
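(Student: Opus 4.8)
The plan is to reduce Proposition \ref{prop DV} to Proposition \ref{prop assembly} by identifying the operator $D_V$ with a suitable "twisted" version of $D$, and then tracking the class $[V]_2 \in KK(\C, C^*H)$ through the $KK$-theoretic formulas. Concretely, I would first observe that $\tilde E \otimes V \to \tilde M$ plays the role of the lift (in the sense of Definition \ref{def ell}) of the bundle $E_V \to M$, and $\tilde D \otimes 1_V$ is a lift of $D_V$. Since $H$ acts freely, Proposition \ref{prop assembly} applies verbatim to $D_V$ and gives
\[
\mu_M^G[D_V] = \ind_G(D_V) = [1_H] \otimes_{C^*H} \ind_{H,G}(\tilde D \otimes 1_V).
\]
So the proposition will follow once we show
\[
[1_H] \otimes_{C^*H} \ind_{H,G}(\tilde D \otimes 1_V) = [V]_2 \otimes_{C^*H} \ind_{H,G}(\tilde D).
\]

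For that identity I would unwind both $(H,G)$-equivariant indices using Definition \ref{def index}. The class $[\tilde D \otimes 1_V] \in KK^G_0(C_0(\tilde M)\rtimes H, \C)$ built from $\tilde D\otimes 1_V$ should factor as $[\tilde D]$ precomposed with a $*$-homomorphism (or $KK$-class) coming from $V$: namely tensoring a $C_0(\tilde M)\rtimes H$-module with $V$ corresponds, at the level of $KK$, to composition with the class in $KK(C_0(\tilde M)\rtimes H, C_0(\tilde M)\rtimes H)$ induced by $\id_{C_0(\tilde M)} \otimes (\text{mult.\ by }V)$ on $H$. After applying the descent map $j^G$ — which is functorial — this means $j^G[\tilde D\otimes 1_V]$ equals $j^G[\tilde D]$ composed on the left with the descended twist. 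The key computation is then that, on the $H$-side, pairing $[\tilde p_H]$ (pushed through $j^H$) with this twist before capping with $[1_H]$ is the same as pairing $[\tilde p_H]$ with the bare $[\tilde D]$ after capping with $[V]_2$: both amount to extracting the multiplicity space of $V$ rather than the invariants. This is exactly the Peter--Weyl bookkeeping already used in the proof of Lemma \ref{lem H triv}, where $j^H[V]$ acts as the projection $\proj_V$; here $\langle [1_H], \proj_V([\text{something}]) \rangle$ gets replaced by $\langle [V], \proj_{1_H}([\text{something}])\rangle$, and the two coincide.

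Alternatively, and perhaps more cleanly, I would phrase the twist at the level of $C^*H$: the class of $L^2(\tilde E)\otimes V$ relative to $L^2(\tilde E)$ is implemented by a $KK(C^*H, C^*H)$-class, and the left action of $C^*H$ on $j^H[\tilde p] \otimes j^G[\tilde D\otimes 1_V]$ differs from that on $j^H[\tilde p]\otimes j^G[\tilde D]$ precisely by tensoring with $V$; then $[1_H] \otimes_{C^*H}(-)$ composed with "tensor with $V$" equals $[V]_2 \otimes_{C^*H}(-)$, since for a representation $W$ of $H$ one has $(W\otimes V)^H = \Hom_H(V^*, W)$. I expect the main obstacle to be the careful verification that the twisting by $V$ commutes with the descent map $j^G$ and with the Kasparov product over $C_0(\tilde M)\rtimes(G\times H)$ in the way claimed — i.e.\ that one may legitimately move the $V$-twist from the "$\tilde D$ slot" to the "$C^*H$ slot" — which requires writing $\tilde D \otimes 1_V$ as an external or internal product in a form compatible with Kucerovsky's criterion, much as in Lemma \ref{lem ME}. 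Once that compatibility is in hand, the remaining identity is the formal Peter--Weyl / Frobenius-reciprocity statement above, and the conclusion $\mu_M^G[D_V] = [V]_2 \otimes_{C^*H}\ind_{H,G}(\tilde D)$ follows.
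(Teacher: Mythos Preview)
Your proposal is correct and follows essentially the same strategy as the paper: reduce to Proposition~\ref{prop assembly} applied to $D_V$ (equivalently, identify $[\cM]\otimes[D_V]$ with the class of $\tilde D\otimes 1_V$), factor the twist by $V$ out of $[\tilde D\otimes 1_V]$, and finish with the Frobenius-type identity that exchanges $[1_H]\otimes_{C^*H}(\text{twist by }V)(-)$ for $[V]_2\otimes_{C^*H}(-)$.

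The one technical device you do not mention, and which the paper uses to make precise exactly the step you flag as the main obstacle, is to introduce an auxiliary action of $H\times H$ on $\tilde M$ in which the second copy of $H$ acts trivially on $\tilde M$ and via the given representation on $V$. With this bookkeeping, $[\tilde D\otimes 1_V]$ becomes the \emph{external} product $[\tilde D]\boxtimes[V]_1$ in $KK_G\bigl((C_0(\tilde M)\rtimes H)\otimes C^*H,\C\bigr)$, and likewise $j^{H\times H}[\tilde p]=j^H[\tilde p]\boxtimes 1_{C^*H}$; external products manifestly commute with descent and with the Kasparov product in the other variable, so the ``move the $V$-twist between slots'' problem dissolves. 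Restricting to the diagonal $\Delta(H)\subset H\times H$ then recovers the genuine $H$-action on $\tilde E\otimes V$, and the identity
\[
[1_H]\otimes_{C^*H}\Res^{H\times H}_{\Delta(H)}\bigl(x\boxtimes[V]_1\bigr)=[V]_2\otimes_{C^*H}x
\]
is the precise form of the Peter--Weyl/Frobenius statement you describe. So your outline is right; the $H\times H$ trick is simply the clean way to carry it out without having to verify compatibility of an internal twist with descent by hand.
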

\begin{proof}
For any  $G$-$C^*$-algebra  $B$ and  $(G \times H \times H)$-$C^*$-algebra $A$, let
\[
\Res^{H \times H}_{\Delta(H)}\colon KK^G(A \rtimes(H \times H) ,B ) \to K^G(A \rtimes H, B)
\]
be defined by restriction to the diagonal in $H \times H$ (and similarly for the case where $G = \{e\}$).
Consider the action by $G \times H \times H$ on $\tilde M$, where the second factor $H$ acts trivially.
Consider the diagram
\beq{eq diag H free}
\xymatrix{
KK_0^G(C_0(\tilde M) \rtimes(H \times H), \C) \ar[rr]^-{\ind_{H \times H, G}}\ar[d]_-{\Res^{H \times H}_{\Delta(H)}}&& KK_0(C^*(H \times H), C^*G) \ar[d]_-{\Res^{H \times H}_{\Delta(H)}}\\
KK_0^G(C_0(\tilde M) \rtimes H, \C) \ar[rr]^-{\ind_{H, G}} && KK_0(C^*H, C^*G) \ar[d]_-{[1_H] \otimes_{C^*H} \relbar}\\
KK_0^G(C_0( M) , \C) \ar[rr]^-{\mu_M^G} \ar[u]^-{[\cM] \otimes_{C_0(M)} \relbar}_-{\cong}&& KK_0(\C, C^*G)
}
\eeq
The bottom part of this diagram commutes by Proposition \ref{prop assembly}. Because the second factor $H$ acts trivially on $\tilde M$, the projection $\tilde p$ also defines a class in $KK_{H\times H}(\C, C_0(\tilde M) \rtimes G)$, which we denote by $[\tilde p]_{H \times H}$. Let
\[
1_{C^*H} \in KK(C^*H, C^*H)
\]
be the identity element. Again, because
 the second factor $H$ acts trivially on $\tilde M$, we have
\begin{multline} \label{eq jHp}
j^{H \times H}[\tilde p]_{H \times H} =  j^H[\tilde p] \boxtimes 1_{C^*H}\\
\in KK(C^*(H \times H), C_0(\tilde M) \rtimes (G \times H \times H)) = KK(C^*H \otimes C^*H, C_0(\tilde M) \rtimes (G \times H) \otimes C^*H),
\end{multline}
where $\boxtimes$ denotes the exterior Kasparov product. This equality implies that the top part of \eqref{eq diag H free} commutes. By Lemma \ref{lem ME}, we have
\[
[\cM] \otimes_{C_0(M)} [D_V] = \Res^{H \times H}_{\Delta(H)}[\tilde D \otimes 1_V].
\]
Here we view $\tilde D \otimes 1_V$ as a $G \times H \times H$-equivariant operator, where the second factor $H$ acts trivially on $\tilde M$, and on $E \otimes V$ via its action on $V$. By commutativity of \eqref{eq diag H free}, we therefore have
\beq{eq H free 1}
\mu_M^G[D_V] = [1_H] \otimes_{C^*H} \bigl(\Res^{H \times H}_{\Delta(H)} (\ind_{H\times H, G}(\tilde D \otimes 1_V))\bigr).
\eeq

Next, again using the fact that the second factor $H$ acts trivially on $\tilde M$, we have
\[
[\tilde D \otimes 1_V] = [\tilde D] \boxtimes [V]_1 \quad \in KK_G(C_0(\tilde M) \rtimes(H \times H), \C) = KK_G((C_0(\tilde M) \rtimes H) \otimes C^*H, \C).
\]
Here  $\boxtimes$ again denotes the exterior Kasparov product. By this equality and \eqref{eq jHp},
\beq{eq H free 2}
\begin{split}
\ind_{H\times H, G}(\tilde D \otimes 1_V) &=
(j^{H}[\tilde p] \boxtimes 1_{C^*H}) \otimes_{C_0(\tilde M) \rtimes (G \times H) \otimes C^*H} (j^G[\tilde D] \boxtimes [V]_1)\\
&= \ind_{H, G}(\tilde D) \boxtimes [V]_1.
\end{split}
\eeq

Finally, we have for all $C^*$-algebras $A$ and all $x \in KK(C^*H, A)$,
\[
[1_H] \otimes_{C^*H} \Res^{H \times H}_{\Delta(H)} (x \boxtimes [V]_1) = [V]_2 \otimes_{C^*H} x.
\]
Combining this equality with \eqref{eq H free 1} and \eqref{eq H free 2}, we conclude that the desired equality holds.
\end{proof}

\begin{proofof}{Theorem \ref{thm H free}}
Proposition \ref{prop DV} implies that
\[
\begin{split}
(I_G)_*\ind_{H,G}(\tilde D) &= \bigoplus_{V \in \hat H} ([V]_2 \otimes_{C^*H}(I_G)_*\ind_{H, G} (\tilde D)) \otimes [V]_1\\
&=  \bigoplus_{V \in \hat H} ((I_G)_*\mu_M^G (D_V)) \otimes [V]_1
\end{split}
\]
By unimodularity of $G$,
Theorem 2.7  and  Proposition D.3 in \cite{Mathai10}  imply that the latter expression equals
 \[
 \bigoplus_{V \in \hat H} \bigl(\dim(\ker (D_V^+)^G) - \dim(\ker (D_V^-)^G) \bigr) \otimes [V]_1 = [\ker(\tilde D^+)^G] - [\ker(\tilde D^-)^G].
 \]
\end{proofof}

\begin{proofof}{Corollary \ref{cor H free}}
Associativity of the Kasparov product 
and Theorem \ref{thm H free} imply that
\[
[1_H] \otimes_{C^*H} \ind_H(D^{G}) = (\textstyle{I_{G}})_* \ind_{G}(D).
\]
Because $G$ is unimodular, Theorem 2.7  and  Proposition D.3 in \cite{Mathai10} imply that
the right hand side equals
\[
\dim(\ker(\tilde D^+)^{H \times G}) - \dim(\ker(\tilde D^-)^{H \times G}).
\]
\end{proofof}

\section{Symmetric spaces and locally symmetric spaces}

In this section, we consider a Lie group $G$, a maximal compact subgroup $K<G$,  
the manifold $\tilde M = G$, and the operator $\tilde D$ which is the pullback of the $\Spin$-Dirac operator on $G/K$. Then we obtain a realisation of Dirac induction as a Kasparov product, and an index formula for compact locally symmetric spaces.

\subsection{Dirac induction}\label{sec DInd}

Using the $(H,G)$-equivariant index from Definition \ref{def index}, we can realise the Dirac induction map from the Connes--Kasparov conjecture \cite{Connes94, CEN, Lafforgue02b, Wassermann87} as a Kasparov product.

We will sometimes consider a particular transversally elliptic Dirac-type operator. 
Let $G$ be an almost connected Lie group, and let $K<G$ be maximal compact. Let $\kp \subset \kg$ be the orthogonal complement of $\kk$ with respect to an $\Ad(K)$-invariant inner product on $\kg$. Suppose that the adjoint representation
$
K \to \SO(\kp)
$
of $K$ in $\kp$ lifts to a homomorphism
\beq{eq tilde Ad}
K \to \Spin(\kp).
\eeq
(This is true of we replace $G$ by a double cover if necessary.) Let $\Delta_{\kp}$ be the standard representation of $\Spin(\kp)$. We view it as a representation of $K$ via the map \eqref{eq tilde Ad}.

Let $\{X_1, \ldots, X_l\}$ be an orthonormal basis of $\kp$.
We denote the left regular representation of $G$ in $C^{\infty}(G)$ by $L$ and the Clifford action by $\kp$ on $\Delta_{\kp}$ by $c$.
For $V \in \hat K$, consider the operator
\beq{eq DGKV}
D_{G,K}^V := \sum_{j=1}^{l} L(X_j) \otimes c(X_j) \otimes 1_V
\eeq
on the space
\beq{eq secs GV}
C^{\infty}(G) \otimes \Delta_{\kp} \otimes V
\eeq
of sections of the trivial $G\times K$-equivariant vector bundle
\[
G \times (\Delta_{\kp} \otimes V) \to G.
\]

The operator $D_{G,K}^V$ is  $G\times K$-equivariant, and $K$-transversally elliptic. So we in particular have the element
\beq{eq DInd}
\DInd :=
\ind_{K,G} (D_{G,K}^{\C}) \in KK_*(C^*K, C^*G).
\eeq
Here $\C$ is the trivial representation of $K$.
If $G/K$ is even-dimensional, then $\Delta_{\kp}$ is $\Z_2$-graded, and this element lies in even $KK$-theory. For odd-dimensional $G/K$, $\Delta_{\kp}$ is ungraded, and this index lies in odd $KK$-theory.

\begin{proposition} \label{prop DInd}
For all $V \in \hat K$,
\[
[V]_2 \otimes_{C^*K} \DInd = \DInd_K^G[V] \quad \in K_*(C^*G),
\]
where $[V]_2$ is as in \eqref{eq V1 V2}, and on the right hand side, $\DInd_K^G$ is the Dirac induction map.
\end{proposition}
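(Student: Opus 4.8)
The plan is to reduce Proposition~\ref{prop DInd} to Proposition~\ref{prop DV} by identifying the operator $D_V$ in that proposition with the $\Spin$-Dirac operator twisted by $V$ in the present geometric situation. Concretely, take $\tilde M = G$ with commuting free actions of $G$ (left translation) and $K$ (right translation), and take $\tilde E = G \times \Delta_{\kp}$ with the operator $\tilde D = D_{G,K}^{\C} = \sum_j L(X_j)\otimes c(X_j)$. Since $K$ acts freely on $G$, the quotient $M = G/K$ is smooth, and the general construction of Section~\ref{sec H free} produces, for each $V \in \hat K$, the bundle $E_V = (G \times (\Delta_{\kp}\otimes V))/K \to G/K$ and the elliptic operator $D_V$ on $\Gamma^\infty(G/K, E_V)$. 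The first step is to check that $E_V$ is the bundle $G\times_K (\Delta_{\kp}\otimes V)$ associated to the principal $K$-bundle $G \to G/K$, and that $D_V$ is precisely the $\Spin$-Dirac operator on $G/K$ twisted by the flat-ish homogeneous vector bundle associated to $V$ — this is a standard computation using that the horizontal lift of the Levi-Civita connection on $G/K$ is encoded by the $\kp$-components of the Maurer--Cartan form, so that $\sum_j L(X_j)\otimes c(X_j)\otimes 1_V$ descends to $c\circ\nabla^{E_V}$ on $K$-invariant sections. This identification is essentially the classical Parthasarathy formula and should be quotable or quickly verifiable.

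Given this identification, Proposition~\ref{prop DV} applied with $H = K$, $\tilde D = D_{G,K}^{\C}$ yields immediately
\[
\mu_{G/K}^G[D_V] = [V]_2 \otimes_{C^*K} \ind_{K,G}(D_{G,K}^{\C}) = [V]_2 \otimes_{C^*K}\DInd.
\]
So the proposition reduces to the statement that $\mu_{G/K}^G[D_V] = \DInd_K^G[V]$, i.e.\ that the analytic assembly map applied to the $K$-homology class of the $V$-twisted $\Spin$-Dirac operator on $G/K$ computes Dirac induction. The second step is therefore to invoke the standard description of the Connes--Kasparov Dirac induction map: $\DInd_K^G$ is, by construction, the composite of Kasparov's Dirac element $[\calD_{G/K}] \in KK^G_*(C_0(G/K),\C)$ (the class of the $\Spin$-Dirac operator) with descent and the Mishchenko line bundle, which is exactly $\mu_{G/K}^G$ applied to the appropriate twist; equivalently, $\DInd_K^G[V]$ is the index in $K_*(C^*G)$ of the $G$-invariant extension of the $V$-twisted Dirac operator, which is one of the equivalent formulations of $\mu_{G/K}^G[D_V]$. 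This is essentially the definition of Dirac induction in the references \cite{Connes94, CEN, Wassermann87}, so the content here is bookkeeping: matching $R(K)$-module structures, keeping track of the grading parity (even- versus odd-dimensional $G/K$), and checking that the twisting by $V$ under assembly corresponds to precomposition with $[V]$ in $R(K) = KK(\C,C^*K)$.

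I expect the main obstacle to be purely expository rather than mathematical: making sure the conventions for the Dirac induction map (whether one uses $G/K$ or the Clifford module directly, whether one works with maximal or reduced $C^*G$, and the precise form of the Mishchenko--Fomenko or cutoff idempotent) line up with the formulation of the assembly map $\mu_{G/K}^G$ used in Proposition~\ref{prop assembly} and with the cutoff-function idempotent $p$ from \eqref{eq p}. In particular one should confirm that $D_{G,K}^{\C}$ is genuinely $K$-transversally elliptic (its symbol is invertible off the directions tangent to $K$-orbits, which here are the right-$K$ translations, i.e.\ the $\kk$-directions — this follows since $\{X_j\}$ spans $\kp$ and the Clifford action of $\kp$ is invertible on nonzero covectors in $\kp^*$), so that $\ind_{K,G}$ is defined, and that the identification $\Gamma^\infty(G/K,E_V) = (C^\infty(G)\otimes\Delta_{\kp}\otimes V)^K$ is $G$-equivariant and intertwines $D_V$ with the restriction of $D_{G,K}^V$. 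Once these compatibilities are pinned down, the proof is a two-line chain combining Proposition~\ref{prop DV} with the definition of $\DInd_K^G$.
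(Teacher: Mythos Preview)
Your proposal is correct and follows essentially the same route as the paper: apply Proposition~\ref{prop DV} with $\tilde M = G$, $H = K$, and $\tilde D = D_{G,K}^{\C}$ to obtain $[V]_2 \otimes_{C^*K}\DInd = \mu_{G/K}^G[D_V]$, then identify $D_V$ with the restriction of $D_{G,K}^V$ to $K$-invariant sections and recognise $\mu_{G/K}^G[D_{G/K}^V] = \DInd_K^G[V]$ as the definition of Dirac induction. The paper's proof is terser, treating the identifications you spell out (the associated bundle description, the matching with the standard twisted Dirac operator, transversal ellipticity) as immediate from the definitions rather than as separate verifications.
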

\begin{proof}
Let $V \in \hat K$. Let $D_{G/K}^V$ be the restriction of $D_{G,K}^V$ to the space of $K$-invariant elements of \eqref{eq secs GV}, which is the space of sections of the vector bundle
\[
G \times_K (\Delta_{\kp} \otimes V) \to G/K.
\]
Proposition
\ref{prop DV} implies that for all $V \in \hat K$,
\[
[V] \otimes_{C^*K} \DInd = \mu_{G/K}^G[D_{G/K}^V] = \DInd_K^G[V].
\]
\end{proof}

Let $\eta \in KK_*(C^*G , C^*K)$ be the dual-Dirac element; see for example Section 2.2 of \cite{Lafforgue02}. By Proposition \ref{prop DInd}, a sufficient condition for injectivity of Dirac induction is \[
\DInd \otimes_{C^*G} \eta = 1_{C^*K} \quad \in KK_0(C^*K, C^*K),
\]
whereas a sufficient condition for surjectivity is
\[
\eta \otimes_{C^*K} \DInd  = 1_{C^*G} \quad \in KK_0(C^*G, C^*G).
\]
Bijectivity of Dirac induction was proved in \cite{CEN, Lafforgue02b}. See also forthcoming work by Higson, Song and Tang.

\subsection{An index theorem for $\Spin$-Dirac operators on compact locally symmetric spaces} \label{sec loc symm}
%
%

Consider
a compact locally symmetric space: an orbifold of the form $X=\Gamma\backslash G/K$ where $G$ is a connected, semisimple Lie group, $K<G$ is a maximal compact subgroup and $\Gamma$ is a cocompact, discrete subgroup in $G$.

%

In several contexts \cite{Gong, Samurkas,Wangwang}, it was shown that traces defined by
 \emph{orbital integrals} on discrete groups are useful tools to extract information from classes in $K$-theory of group $C^*$-algebras. (This is also true for orbital integrals on semisimple Lie groups \cite{HochsWang19} and higher analogues \cite{ST19}.) For a discrete group $\Gamma$, the orbital integral of a function $f \in l^1(\Gamma)$ over a conjugacy class $(\gamma)$ of an element $\gamma \in \Gamma$, is the sum of $f$ over $(\gamma)$:
\[
\tau_{\gamma}(f) = \sum_{h \in (\gamma)}f(h).
\]
We assume that this trace $\tau_{\gamma}$ extends to a continuous functional on a dense subalgebra $\cA(\Gamma) \subset C^*\Gamma$, closed under holomorphic functional calculus (a smooth subalgebra for short). For example, this is  true for every group if $\gamma = e$, and for every $\gamma$ if $G$ has real rank one.
\begin{lemma} \label{lem taug ext}
If $G$ has real rank one, then $\tau_{\gamma}$ defines a continuous linear functional on a smooth subalgebra of $C^*_{\red}\Gamma$.
\end{lemma}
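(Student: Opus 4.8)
The goal is to show that when $G$ has real rank one, the orbital integral $\tau_\gamma$ extends to a continuous trace on a smooth subalgebra of $C^*_{\red}\Gamma$. The natural candidate for this subalgebra is the Harish-Chandra--Schwartz-type algebra, or more precisely a Jolissaint-style rapid-decay algebra attached to $\Gamma$ with respect to the word-length function coming from the embedding $\Gamma \hookrightarrow G$ (equivalently, the restriction of the Riemannian distance on $G/K$ to an orbit). The plan is therefore to first recall that a cocompact lattice $\Gamma$ in a rank-one semisimple Lie group has the \emph{Rapid Decay property} (property (RD)) of Jolissaint--de la Harpe; this is classical and due to de la Harpe for rank one (and, in the cocompact case, to Jolissaint). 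Property (RD) gives a dense Fr\'echet subalgebra $H^\infty_L(\Gamma) \subset C^*_{\red}\Gamma$ of functions of rapid decay with respect to the length function $L$, and this subalgebra is closed under holomorphic functional calculus, i.e.\ it is smooth.

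\textbf{Key steps.} First I would fix the length function $L$ on $\Gamma$ induced from the distance function on $G/K$ based at the point $eK$, i.e.\ $L(\gamma) = d(eK, \gamma K)$, and invoke property (RD) to produce the smooth subalgebra $\cA(\Gamma) := H^\infty_L(\Gamma)$. Second, I would estimate the size of the conjugacy class $(\gamma)$ as a subset of $\Gamma$ with respect to $L$: the key geometric input is that for a \emph{semisimple} element $\gamma$ (which, by cocompactness of $\Gamma$ in a rank-one group, every nontrivial element of $\Gamma$ is, being the deck transformation of a closed geodesic or acting with a fixed point), the centraliser $Z_\Gamma(\gamma)$ is cocompact in the centraliser $Z_G(\gamma)$, and the displacement function $x \mapsto d(x, \gamma x)$ on $G/K$ grows at least linearly in $d(x, Z_G(\gamma)\cdot eK)$ away from the minimal set. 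Translating this into a count, one gets a bound of the form
\[
\#\{ h \in (\gamma) : L(h) \le r \} \le C(1+r)^N
\]
for constants $C, N$ depending on $\gamma$ — a polynomial bound on the growth of the conjugacy class. Third, combining this polynomial growth bound with the rapid (faster than any polynomial) decay of elements of $\cA(\Gamma)$, the sum $\tau_\gamma(f) = \sum_{h \in (\gamma)} f(h)$ converges absolutely for $f \in \cA(\Gamma)$, and the estimate
\[
|\tau_\gamma(f)| \le \sum_{h \in (\gamma)} |f(h)| \le \Bigl(\sum_{h \in (\gamma)} (1+L(h))^{-2N-2}\Bigr)^{1/2} \|f\|_{H^{N+1}_L}
\]
(using Cauchy--Schwarz against the defining seminorms of the rapid-decay algebra) shows $\tau_\gamma$ is continuous for the Fr\'echet topology on $\cA(\Gamma)$. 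Finally, traciality $\tau_\gamma(fg) = \tau_\gamma(gf)$ is inherited from the fact that $\tau_\gamma$ is, on $\C\Gamma$, the linear functional picking out the sum of coefficients over the conjugacy class, which is manifestly conjugation-invariant; it passes to the completion by continuity.

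\textbf{Main obstacle.} The analytic heart is the polynomial growth estimate on conjugacy classes in step two: one must show that $\#\{h \in (\gamma): L(h) \le r\}$ grows polynomially in $r$. For $\gamma = e$ this is trivial, and for general $\gamma$ in a rank-one group it follows from the structure theory of semisimple and the fact that $Z_\Gamma(\gamma) \backslash \Gamma$ maps to the conjugacy class, so that one is really counting lattice points $Z_\Gamma(\gamma)\backslash\Gamma$ in balls and then bounding $L$ on the class from below by a function of the corresponding coset. The rank-one hypothesis enters precisely here: in higher rank, unipotent or mixed elements can have conjugacy classes of exponential growth, which is why the statement is restricted to rank one (and to $\gamma = e$ in general). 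I expect the cleanest route is to cite the relevant counting estimate — this type of bound appears in work on orbital integrals and trace formulas for rank-one lattices, and is implicit in the Harish-Chandra theory of orbital integrals on rank-one groups restricted to the lattice — rather than rederive it, and to present steps one, three and four in detail as the routine assembly of property (RD) plus the growth bound.
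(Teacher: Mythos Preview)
Your proposal is correct, but the paper's proof takes a much shorter, black-box route. The paper observes that real rank one forces $G/K$ to have strictly negative sectional curvature, hence to be Gromov hyperbolic; the \v{S}varc--Milnor lemma then makes the cocompact lattice $\Gamma$ Gromov hyperbolic, and the existence of a smooth subalgebra on which every $\tau_\gamma$ is continuous is then imported wholesale from Puschnigg's result (Proposition~5.5 in \cite{Puschnigg10}) for hyperbolic groups. No explicit algebra is written down, no conjugacy-class growth estimate is stated, and property (RD) is never mentioned.

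Your approach is essentially an unpacking of what goes into Puschnigg's theorem: property (RD) for $\Gamma$ (which, as you note, follows from Jolissaint/de~la~Harpe, and ultimately again from hyperbolicity of $\Gamma$) supplies the candidate smooth subalgebra $H^\infty_L(\Gamma)$, and the polynomial growth bound on conjugacy classes (also a known feature of hyperbolic groups) gives the continuity of $\tau_\gamma$. What you gain is transparency---the reader sees exactly which algebra works and why---at the cost of having to justify two separate ingredients rather than cite one theorem. The ``main obstacle'' you identify, the polynomial conjugacy-class growth, is genuinely the heart of the matter and is nontrivial; if you go this route you should cite it cleanly (it holds for all hyperbolic groups) rather than sketch a semisimple-structure argument, since the latter would in effect be reproving a special case of the hyperbolic-group result that the paper simply invokes.
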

\begin{proof}
If $G$ has real rank one, then $G/K$ has negative sectional curvature, and hence it is a hyperbolic space in the sense of  Definition 3.1 in \cite{Puschnigg10}. Since $\Gamma$ acts cocompactly on $G/K$, the Svar\v{c}--Milnor lemma implies that it is quasi-isometric to $G/K$, with respect to any word-length metric. Hyperbolicity of metric spaces is preserved by quasi-isometries (see 7.2 in \cite{Gromov87}), so that $\Gamma$ is hyperbolic. See also Section 2.7 in \cite{Gromov87}.
Proposition 5.5 in \cite{Puschnigg10} implies that an algebra as in the lemma exists if $\Gamma$ is hyperbolic.
%
\end{proof}
Examples of groups with real rank one are $\OO(n,1)$ and $\U(n,1)$. If the real rank of $G$ is at least two, then $\Gamma$ is not hyperbolic in general. (Then the sectional curvature of $G/K$ does not have a negative upper bound, as $G/K$ admits an embedding of a Euclidean space of dimension at least two.)

By Corollary \ref{cor H free}, in a sense the most refined index-theoretic information about an eliptic operator $D_X$ on the compact symmetric space $X = \Gamma \backslash G/K$ is the $(K,\Gamma)$-index of its lift $\tilde D$ to $G$.
%
A natural way to obtain potentially computable numbers from the index of $\tilde D$ in $KK(C^*K, C^*\Gamma)$ is to evaluate the component in $\hat R(K)$ at a group element (where this makes sense), and to apply suitable traces to the component in $C^*\Gamma$, such as traces defined by orbital integrals.

Because of our assumption that $\tau_{\gamma}$ extends to a smooth subalgebra $\cA(G)$ of $C^*G$, it
 induces a map
\[
\tau_{\gamma}\colon K_0(C^*\Gamma)  = K_0(\cA(G))\to \C.
\]
Therefore, it is a natural problem to compute the element
\beq{eq taugamma index}
\tau_{\gamma}(\ind_{K, \Gamma}(\tilde D)) \in \hat R(K) \otimes \C.
\eeq
We will do this for the operator $D_{G, K}^{\C}$ as in \eqref{eq DGKV}.
We denote the character of a
 finite-dimensional representation space $V$ of $K$, 
 by $\chi_V$. If $V$ is $\Z_2$ -graded, as $V = \Delta_{\kp}$ is, then $\chi_V$ is the difference of the characters of the even and odd parts of $V$.
\begin{proposition} \label{prop indgamma}
%
%

\begin{enumerate}
\item[(a)] if $\gamma$ is not a torsion element and nontrivial, then $\tau_{\gamma}(\ind_{K, \Gamma}(\tilde D)) = 0$.
\end{enumerate}
Suppose that $G$ has discrete series representations, and let $D_{G,K}^{\C}$ be as in \eqref{eq DGKV}.
\begin{enumerate}
\item[(b)] 
For $\gamma = e$,
\[
\tau_{e}(\ind_{K,\Gamma}(D_{G,K}^{\C}))=\vol(\Gamma\backslash G)
\sum_{\pi}
d_{\pi}
[V_{\pi}],
\]
where $d_{\pi}$ is the formal degree of the discrete series representation $\pi$ of $G$, and $V_{\pi} \in \hat K$ is the irreducible representation corresponding to $\pi$ via Dirac induction.
\item[(c)] If $\gamma$ is a regular element of a compact Cartan subgroup of $G$ contained in $K$, then 
\[
\tau_{\gamma}(\ind_{K, \Gamma}(D_{G,K}^{\C})) =
 (-1)^{\dim(G/K)/2}
  \frac{\vol(Z_{G}(\gamma)/Z_{\Gamma}(\gamma))}{\chi_{\Delta_{\kp}}(\gamma)}
 \sum_{[V] \in \hat K} \chi_V(\gamma)[V].
\]
\end{enumerate}
%
%
%
\end{proposition}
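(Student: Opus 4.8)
The plan is to decompose the index over $\hat K$ and then to apply a delocalised index theorem for orbital integrals. By Proposition~\ref{prop DV}, applied with the discrete group $\Gamma$ playing the role of the locally compact group, $H = K$ and $M = G/K$, the component of $\ind_{K,\Gamma}(\tilde D)$ along a class $[V]\in\hat K$ equals
\[
[V]_2 \otimes_{C^*K}\ind_{K,\Gamma}(\tilde D) \;=\; \mu_{G/K}^{\Gamma}[D_{G/K}^V] \quad \in K_0(C^*\Gamma),
\]
where $\tilde D = D_{G,K}^{\C}$ is as in \eqref{eq DGKV} and $D_{G/K}^V$ is the Dirac operator on $G/K$ twisted by $G \times_K (\Delta_{\kp}\otimes V)$, as in the proof of Proposition~\ref{prop DInd}. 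Thus it suffices to evaluate the scalar $\tau_{\gamma}\bigl(\mu_{G/K}^{\Gamma}[D_{G/K}^V]\bigr)$ for each $V$, and then to form $\tau_{\gamma}(\ind_{K,\Gamma}(\tilde D)) = \sum_{[V]\in\hat K}\tau_{\gamma}\bigl(\mu_{G/K}^{\Gamma}[D_{G/K}^V]\bigr)\,[V]$.

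For this I would invoke the fixed-point (delocalised) index theorem for orbital integrals on the $K$-theory of $C^*\Gamma$ applied to $\Gamma$-equivariant Dirac-type operators on $G/K$, as in \cite{Wangwang, Gong, Samurkas} (and, on the group side, \cite{HochsWang19}): it expresses $\tau_{\gamma}\bigl(\mu_{G/K}^{\Gamma}[D]\bigr)$ as the integral of an Atiyah--Singer-type local index density over $Z_{\Gamma}(\gamma)\backslash (G/K)^{\gamma}$ with respect to the induced measure. The key geometric input is that, since $\Gamma$ is a cocompact lattice in the semisimple group $G$, every element of $\Gamma$ is semisimple, and a semisimple element of the discrete group $\Gamma$ is elliptic (conjugate into $K$) if and only if it generates a finite subgroup, i.e.\ is torsion. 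Hence a nontrivial non-torsion $\gamma$ acts on the $\mathrm{CAT}(0)$-space $G/K$ as a nontrivial semisimple isometry without fixed points, so $(G/K)^{\gamma} = \emptyset$, every term $\tau_{\gamma}\bigl(\mu_{G/K}^{\Gamma}[D_{G/K}^V]\bigr)$ vanishes, and part (a) follows.

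I would then distinguish the remaining cases, in which $(G/K)^{\gamma}$ is nonempty. If $\gamma = e$, then $(G/K)^e = G/K$, and $\tau_e\bigl(\mu_{G/K}^{\Gamma}[D_{G/K}^V]\bigr)$ is the $\Gamma$-$L^2$-index of $D_{G/K}^V$, equal to $\vol(\Gamma\backslash G)$ times the formal degree of the virtual $G$-representation $\ker(D_{G/K}^{V,+}) \ominus \ker(D_{G/K}^{V,-})$; by Parthasarathy's determination of these kernels --- equivalently by the Atiyah--Schmid realisation of discrete series, or by the description of Dirac induction via Proposition~\ref{prop DV} together with $\mu_{G/K}^G[D_{G/K}^V] = \DInd_K^G[V]$ --- this formal degree is $d_{\pi}$ when $V = V_{\pi}$ for a discrete series representation $\pi$ of $G$, and $0$ otherwise (the parameter being singular). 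Summing over $\hat K$ gives (b). If instead $\gamma$ is regular and lies in a compact Cartan subgroup $T \subset K$, then $Z_G(\gamma) = T \subset K$, so $(G/K)^{\gamma}$ is the single point $eK$, and the delocalised Atiyah--Bott--Segal--Singer fixed-point formula expresses $\tau_{\gamma}\bigl(\mu_{G/K}^{\Gamma}[D_{G/K}^V]\bigr)$ as $\vol\bigl(Z_G(\gamma)/Z_{\Gamma}(\gamma)\bigr)$ times the local Dirac number at $eK$, where $\gamma$ acts on $T_{eK}(G/K) = \kp$ by $\Ad(\gamma)$ and on the twisting fibre $\Delta_{\kp}\otimes V$ via the spin representation and the action on $V$, so that $\ch_{\gamma}(\Delta_{\kp}\otimes V) = \chi_{\Delta_{\kp}}(\gamma)\,\chi_V(\gamma)$. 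Using the classical identity that the Lefschetz denominator attached to $\Ad(\gamma)|_{\kp}$ equals $\chi_{\Delta_{\kp}}(\gamma)$ up to the sign $(-1)^{\dim(G/K)/2}$ --- and $\chi_{\Delta_{\kp}}(\gamma)\neq 0$ because $\gamma$ is regular --- the local number is $(-1)^{\dim(G/K)/2}\,\chi_V(\gamma)/\chi_{\Delta_{\kp}}(\gamma)$, and summing over $\hat K$ gives (c).

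The step I expect to be the main obstacle is making the second step precise: invoking the delocalised index theorem in sufficient generality (a proper, cocompact, possibly non-free action of a discrete group on a complete manifold of nonpositive curvature, for first-order Dirac-type operators) and, above all, bookkeeping the exact normalising constants --- the factor $\vol(\Gamma\backslash G)$ in (b) and the centraliser covolume $\vol\bigl(Z_G(\gamma)/Z_{\Gamma}(\gamma)\bigr)$ in (c), the sign $(-1)^{\dim(G/K)/2}$ and the appearance of $\chi_{\Delta_{\kp}}(\gamma)$ in the Dirac fixed-point density, and, in (b), matching the grading and chamber conventions implicit in the correspondence $\pi \leftrightarrow V_{\pi}$. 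By comparison, the $K$-equivariant part --- tracking the $[V]$-components throughout via Proposition~\ref{prop DV} --- should be routine.
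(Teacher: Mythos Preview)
Your approach matches the paper's: decompose $\ind_{K,\Gamma}(\tilde D)$ over $\hat K$ via Proposition~\ref{prop DV} as $\sum_{V}[V]\otimes\mu_{G/K}^{\Gamma}[D_V]$, then evaluate $\tau_{\gamma}$ on each term using the delocalised fixed-point theorem of \cite{Wangwang}, which in particular gives vanishing in (a) because a nontrivial non-torsion $\gamma$ has no fixed points in $G/K$.

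The only difference is in how (b) and (c) are executed. Where you propose to compute $\tau_{\gamma}\bigl(\mu_{G/K}^{\Gamma}[D_V]\bigr)$ directly on the $\Gamma$-side via the fixed-point density at $eK$, the paper instead transfers to orbital integrals on the semisimple group $G$: it writes
\[
\tau_{\gamma}\bigl(\mu_{G/K}^{\Gamma}(D_V)\bigr)=\vol\bigl(Z_{G}(\gamma)/Z_{\Gamma}(\gamma)\bigr)\,\tau_{\gamma}^{G}\bigl(\mu_{G/K}^{G}(D_V)\bigr)
\]
(via \cite{Wangwang}, or \cite{Wang14} at $\gamma=e$) and then cites already-established computations of $\tau_{\gamma}^{G}$ on the $G$-side --- Lemma~5.4 of \cite{HW18} and Corollary~7.3.B of \cite{Connes82} for (b), Theorem~3.2 of \cite{HochsWang19} for (c). This buys exactly what you identified as the main obstacle: the volume factors, the sign $(-1)^{\dim(G/K)/2}$, and the appearance of $\chi_{\Delta_{\kp}}(\gamma)$ are packaged in those references rather than rederived, and the matching $\pi\leftrightarrow V_{\pi}$ in (b) is handled by the $G$-equivariant index rather than an explicit Parthasarathy computation. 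Your direct route is also correct and arguably more self-contained, at the cost of the bookkeeping you flagged.
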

\begin{proof}
For $V \in \hat K$, let $D_V$ be as in Subsection \ref{sec H free}. 
By Proposition \ref{prop DV}, 
\begin{multline} \label{eq ind K Gamma D}
\ind_{K, \Gamma}(D_{G,K}^{\C}) = \sum_{V \in \hat K} [V] \otimes \mu_{G/K}^{\Gamma}(D_V) \\
 \in \hat R(K) \otimes K_*(C^*\Gamma) = KK(C^*K, C^*\Gamma).
\end{multline}
%

If $\gamma$ is not a torsion element, then it has no fixed points in $G/K$. As a special case of Theorem 5.10 in \cite{Wangwang}, this implies that $\tau_{\gamma}(\ind_{\Gamma}(D_V)) = 0$, so part (a) follows.



For a semisimple element $g \in G$, let $\tau^G_g$ be the corresponding orbital integral:
\[
\tau_g^G(f) = \int_{G/Z_G(g)}f(xgx^{-1})\, d(xZ_G(g)),
\]
for $f$ such that this converges. 

If 
$\tilde D = D_{G,K}^{\C}$ as in \eqref{eq DGKV}, then for all discrete series representations $\pi$ of $G$,
%
%
the index formula Theorem 6.12 in \cite{Wang14} implies in particular that 
\[
\tau_e (\mu_{G/K}^{\Gamma}(D_{V_{\pi}}))= \vol(\Gamma \backslash G) \tau^G_e(\ind_G(D_{V_{\pi}}))
\]
By 
 Lemma 5.4 in \cite{HW18}, $\tau^G_e (\ind_G(D_{V_{\pi}}) )= d_{\pi}$. 
And if $V \in \hat K$ does not correspond to a discrete series representation, then $\ind_{e}D_{V_{\pi}} = 0$. See Corollary 7.3.B in \cite{Connes82}. So part (b) follows.
%
%
%
%
%

For part (c), we use the fact that 
\[
\tau_{\gamma}(\mu_{G/K}^{\Gamma}(D_V)) = \vol(Z_{G}(\gamma)/Z_{\Gamma}(\gamma))
\tau_{\gamma}^G(\mu_{G/K}^{G}(D_V)).
\]
This follows from the topological expression for these indices in Theorem 5.10 in \cite{Wangwang}. So
\[
\tau_{\gamma}(\ind_{K, \Gamma}(\tilde D)) =
\vol(Z_{G}(\gamma)/Z_{\Gamma}(\gamma))
 \sum_{[V] \in \hat K} \tau_{\gamma}^G(\mu_{G/K}^{G}(D_V)) [V].
 \]
Part (c) now follows from Theorem 3.2 in \cite{HochsWang19}. 
\end{proof}
A version of part (b) of  Proposition \ref{prop indgamma}
was used by  Atiyah and Schmid to obtain formal degrees of discrete series of $G$ \cite{Atiyah77}.

\begin{remark}
For general $\Gamma \times K$-equivariant, $K$-transversally elliptic operators $\tilde D$, a topological expression for the coefficients of all irreducible representations of $V$ in \eqref{eq taugamma index} can be found by combining \eqref{eq ind K Gamma D} with a generalisation of Theorem 6.1 in \cite{Wangwang} to arbitrary elliptic operators, as in the proof of Theorem 2.5 in \cite{HW18}.
\end{remark}

\bibliographystyle{plain}
\bibliography{mybib}

\end{document}